\numberwithin{equation}{section}
\newtheorem{theorem}{Theorem}[section]
\newtheorem{lemma}[theorem]{Lemma}
\newtheorem{proposition}[theorem]{Proposition}
\newenvironment{proof}[1][Proof]{\textbf{#1.} }{\ \rule{0.5em}{0.5em}}
\begin{document}

\title{On the Finite Dimensional Joint Characteristic Function of L\'{e}%
vy's Stochastic Area Processes}
\author{Xi Geng\thanks{%
Mathematical Institute, University of Oxford, Oxford OX1 3LB, England.
Email: xi.geng@maths.ox.ac.uk} and Zhongmin Qian\thanks{%
Exeter College, Oxford OX1 3DP, England. Email: qianz@maths.ox.ac.uk.}}
\maketitle

\begin{abstract}
The goal of this paper is to derive a formula for the finite dimensional
joint characteristic function (the Fourier transform of the finite
dimensional distribution) of the coupled process $\{(W_{t},L_{t}^{A}):t\in
\lbrack 0,\infty )\}$, where $\{W_{t}:t\in \lbrack 0,\infty )\}$ is a $d$%
-dimensional Brownian motion and $\{L_{t}^{A}:t\in \lbrack 0,\infty )\}$ is
the generalized $d$-dimensional L$\acute{e}$vy's stochastic area process
associated to a $d\times d$ matrix $A.$ Here $A$ need not be skew-symmetric,
and in our computation we allow $A$ to vary. The problem finally reduces to
the solution of a recursive system of symmetric matrix Riccati equations and
a system of independent first order linear matrix ODEs. As an example, the
two dimensional L\'{e}vy's stochastic area process is studied in detail.
\end{abstract}

%\author{\textsc{By Xi Geng  and Zhongmin Qian }}
%\address{Mathematical Institute, University of Oxford}

%%\pagestyle{plain}
%\synctex=-1
%\makeatletter
%\numberwithin{equation}{section}
%\numberwithin{figure}{section}
%\theoremstyle{plain}
%\newtheorem{thm}{\protect\theoremname}
%\theoremstyle{plain}

%\newtheorem{lem}[thm]{\protect\lemmaname}
%\theoremstyle{plain}

%\newtheorem{prop}[thm]{\protect\propositionname}
%\makeatother
%\providecommand{\lemmaname}{Lemma}
% \providecommand{\propositionname}{Proposition}
%\providecommand{\theoremname}{Theorem}

%\input{tcilatex}

%\begin{document}
%\title{}
%\author{}

\section{Introduction}

The L\'{e}vy's stochastic area process $\{L_{t}:t\in \lbrack 0,+\infty )\}$
associated with a two dimensional Brownian motion $%
\{(W_{t}^{(1)},W_{t}^{(2)}):t\in \lbrack 0,+\infty )\}$ was introduced by P.
L\'{e}vy in {\cite{MR0002734}} defined as
\begin{equation*}
L_{t}:=\frac{1}{2}%
\int_{0}^{t}(W_{s}^{(1)}dW_{s}^{(2)}-W_{s}^{(2)}dW_{s}^{(1)})\text{,}\ \ \
t\geqslant 0\text{.}
\end{equation*}%
The geometric meaning of the process $L_{t}$ is the algebraic area enclosed
by the two dimensional Brownian path up to time $t$ and the segment of the
origin $O$ and the point $(W_{t}^{(1)},W_{t}^{(2)})$. By using stochastic
Fourier expansion of Brownian motions, L\'{e}vy derived the conditional
characteristic function of $L_{t}$ with respect to $W_{t}$ as
\begin{equation*}
\mathbb{E}[\exp (i\lambda L_{t})|W_{t}=x]=\frac{t\lambda }{2\sinh (t\lambda
/2)}\exp \left[ \frac{|x|^{2}}{2t}\left( 1-\frac{t\lambda }{2}\coth \frac{%
t\lambda }{2}\right) \right] \text{.}
\end{equation*}

It follows that the characteristic function of $L_{t}$ is given by
\begin{equation*}
\mathbb{E}[\exp (i\lambda L_{t})]=\frac{1}{\cosh (\lambda t/2)}
\end{equation*}%
and the joint characteristic function of the coupled process $%
\{(W_{t},L_{t}):t\geqslant 0\}$ can be computed explicitly.

To compute the finite dimensional joint characteristic function of the
coupled process is harder than the one-dimensional marginal one, since it
involves complicated correlations among different time spots; although the
process is Markovian, it is still nontrivial to derive the transition
density explicitly.

In T. Hida's paper {\cite{MR0301806}}, he considered the problem from a
different and a more general perspective, under the framework of Wiener-It%
\^{o}'s chaos decomposition. He studied the law of quadratic functionals of
Brownian motion on the canonical Wiener space $W$, which are elements in the
second order chaos component. The classical L\'{e}vy's stochastic area
process is a special case in his general setting. The main idea of his
approach is to use the celebrated Wiener-It\^{o}'s chaos decomposition
theorem to construct a one-to-one correspondance between such quadratic
functionals and symmetric Hilbert-Schmidt operators on the Cameron-Martin
subspace $H$ of $W$. By a formal computation, he found out that for any
quadratic functional $F$ on $W$, if $B$ is the corresponding symmetric
Hilbert-Schmidt operator on $H$, then the characteristic function (Fourier
transform) of $F$ is given by
\begin{equation*}
\int_{W}e^{i\lambda F(\omega )}\mu (d\omega )=\prod_{n=1}^{\infty
}(1-2i\lambda \lambda _{n})^{-1/2}e^{-i\lambda \lambda _{n}}\text{,}
\end{equation*}%
where $\{\lambda _{n}:n\geqslant 1\}$ is the family of eigenvalues of $B$
with multiplicity. He also pointed out that the right hand side of the above
identity is equal to
\begin{equation*}
(\det {}_{2}(I-2i\lambda B))^{-1/2}\text{, }
\end{equation*}%
where $\det_{2}$ denotes the regularized Fredholm determinant. By studying
the special example of L\'{e}vy's stochastic area process and computing the
eigenvalues of the corresponding operator, he recovered L\'{e}vy's formula.

After Hida's important work, different methods of computing the regularized
Fredholm determinant in order to study the laws of a wider class of
quadratic Wiener functionals were developed by several Japanese
mathematicians: N. Ikeda, S. Manabe, S. Kusuoka, K. Hara, etc.. The
fundamental ideas in their works can be summerized as two approaches: to
compute the spectrum of the corresponding Hilbert-Schmidt operator directly,
or to reduce the infinite dimensional case to the finite dimensional one by
posing additional assumptions on the operator. The first approach was
developed in Ikeda and Manabe's paper {\cite{MR1354166}}, in which they
computed the spectrum of a variety of quadratic Wiener functionals in order
to study the asymptotic behavior of stochastic oscillatory integrals on the
Wiener space. The second approach was developed from different aspects based
on a fundamental decomposition assumption proposed by Ikeda, Kusuoka and
Manabe. They restricted themselves to the study of operators which could be
divided into an operator of finite dimensional range and an operator of
Volterra's type, in order to reduce the case to a finite dimensional one in
a certain sense. In Ikeda, Kusuoka and Manabe's paper \cite{MR1335477},
based on such decomposition, the computation of the regularized determinant
is reduced to the computation of a finite dimensional determinant, in which
algebraic methods and differential equations methods can be applied. Later
in {\cite{MR1266245}} they developed a general method to compute the law of
the corresponding type of quadratic Wiener functionls by using the
Cameron-Martin transformation along critical paths. In the meanwhile, they
put forward another idea for the computation by relating the problem to the Van
Vleck-Pauli formula in quantum mechanics and derived a formula for the law
of a certain class of quadratic Wiener functionals by using physical
approach. In the paper {\cite{MR1869989}} of Hara and Ikeda, they further
developed the ideas in {\cite{MR1266245}} by relating the problem to the
study of dynamics on Grassmannians. Due to the fact that the determinant is
expressed by the solutions of the Jacobi equation with Van Vleck-Pauli's
formula, they showed that the determinant could be computed in terms of Pl%
\"{u}chker coordinates of Grassmannian manifolds.

From the works on computing the regularized determinant, one can see that
the computation is very involved, even reduced to the finite dimensional
case. In the series of works mentioned based on Hida's formula, they
considered the marginal law of the two dimensional L\'{e}vy's stochastic
area process and recovered L\'{e}vy's formula.

It should be pointed out that after L\'{e}vy's original work, several
mathematicians proposed different methods to compute the marginal
characteristic function of the two dimensional L\'{e}vy's stochastic area process
directly. In particular, B. Gaveau {\cite{MR0461589}} studied the marginal
distribution of the process (inversion of the characteristic function). In
the present paper, we are going to study the finite dimensional joint
characteristic funtion of the $d$-dimensional generalized L\'{e}vy's
stochastic area processes from a different angle of view by computing the
multi-dimensional Fourier transform directly. Our work is based on the idea
of K. Helmes and A. Schwane in their paper {\cite{MR724703}} for the
computation of the marginal characteristic function of the $d$-dimensional
Brownian motion together with the general $d$-dimensional L\'{e}vy's
stochastic area processes.

Let $\{W_{t}:t\in \lbrack 0,\infty )\}$ be a $d$-dimensional Brownian
motion, and let $so(d)$ be the space of $d\times d$ skew-symmetric matrices,
where $d\geqslant 2$. Fix $A\in so(d)$, the process
\begin{equation*}
L_{t}^{A}:=\int_{0}^{t}\langle AW_{s},dW_{s}\rangle ,\ \ \ t\in \lbrack
0,\infty ),
\end{equation*}%
is called the general $d$-dimensional L\'{e}vy's stochastic area process
associated to $A$. In Helmes and Schwane's paper {\cite{MR724703}}, they
derived a formula for the marginal characteristic funcion of the coupled
process $\{(W_{t},L_{t}^{A}):t\in \lbrack 0,\infty )\}$, based on the
Cameron-Martin-Girsanov's transformation theorem and It\^{o}'s formula. In
our paper, we are going to further exploit this method to establish the
finite dimensional joint characteristic function (the Fourier transform of
the finite dimensional distribution) of the coupled process in a more
general setting. We will see that it reduces to the solution of a recursive
system of symmetric matrix Riccati equations and a system of independent
first order linear matrix ODEs.

Since the computation is quite lengthy, we first present the main result in
our paper and the main idea for the proof. All details and technical steps
are left to section 2. As an example, we study the two dimensional L\'{e}%
vy's stochastic area process in detail.

Assume that $\{W_{t},\mathcal{F}_{t}:t\geqslant 0\}$ is a $d$-dimensional
Brownian motion on some probability space $(\Omega ,\mathcal{F},\mathbb{P})$%
. For any $A$ being a real $d\times d$ matrix (not necessarily
skew-symmetric), we define the generalized L\'{e}vy's stochastic area
process $\{L_{t}^{A}:t\geqslant 0\}$ associated to $A$ as
\begin{equation*}
L_{t}^{A}:=\int_{0}^{t}\langle AW_{s},dW_{s}\rangle \text{,}\ \ \ t\geqslant
0\text{.}
\end{equation*}%
Fix $0=t_{0}<t_{1}<t_{2}<\cdots <t_{n}<\infty $,$\ n\geqslant 1$. Assume
that $A_{1},\cdots ,A_{n}$ are real $d\times d$ matrices. For $\gamma
_{1},\cdots ,\gamma _{n}\in \mathbb{R}^{d}$, $\Lambda _{1},\cdots ,\Lambda
_{n}\in \mathbb{R}$, we are going to compute the following functional:
\begin{equation*}
f(\gamma _{1},\cdots ,\gamma _{n};\Lambda _{1},\cdots ,\Lambda _{n}):=%
\mathbb{E}\left[ \exp \left( \sum_{k=1}^{n}i\langle \gamma
_{k},W_{t_{k}}\rangle +\sum_{k=1}^{n}i\Lambda _{k}L_{t_{k}}^{A_{k}}\right) %
\right] \text{.}
\end{equation*}%
where $i=\sqrt{-1}$. Notice that the matrix can vary on different time spots.

Throughout this paper, $0<t_{1}<\cdots <t_{n}<\infty $ and $A_{1},\cdots
,A_{n}\in M^{d}(\mathbb{R})$ will be fixed. The notation $\ast $ will denote
the transpose operator. We should point out that even for complex matrices, $%
\ast $ is simply the transpose without taking conjugate. For $%
z=(z^{1},\cdots ,z^{j})\in \mathbb{C}^{n}$, we use $\langle z\rangle {}^{2}$
to denote $\sum_{j=1}^{n}(z^{j})^{2}$ which is the holomorphic extension of
the real case. $Tr$ will be denoted as the trace operator.

Our main result is the following.

\begin{theorem}
\label{mth1}The functional $f(\gamma _{1},\cdots ,\gamma _{n};\Lambda
_{1},\cdots ,\Lambda _{n})$ is determined by
\begin{eqnarray*}
f(\gamma _{1},\cdots ,\gamma _{n};\Lambda _{1},\cdots ,\Lambda _{n})
&=&\prod_{j=1}^{n}\exp \{\frac{1}{2}\int_{0}^{t_{j}}Tr(K_{i\Lambda
_{j},\cdots ,i\Lambda _{n}}(s))ds \\
&&-\frac{1}{2}\int_{t_{j-1}}^{t_{j}}\langle H_{i\Lambda _{j},\cdots
,i\Lambda _{n}}^{\ast -1}(s)H_{i\Lambda _{j},\cdots ,i\Lambda _{n}}^{\ast
}(t_{j})\mu _{j}\rangle ^{2}ds\}.
\end{eqnarray*}%
Here $\{K_{i\Lambda _{j},\cdots ,i\Lambda _{n}}(t):t\in \lbrack
0,t_{j}],j=1,\cdots ,n\}$ is defined as the solution of the recursive system
of $n$ symmetric matrix Riccati equations (starting from $j=n$ to $j=1$):
\begin{eqnarray*}
\frac{d}{dt}K_{i\Lambda _{j},\cdots ,i\Lambda _{n}}(t) &=&C_{i\Lambda
_{j},\cdots ,i\Lambda _{n}}(t)-K_{i\Lambda _{j},\cdots ,i\Lambda
_{n}}(t)(\sum_{r=j+1}^{n}K_{i\Lambda _{r},\cdots ,i\Lambda _{n}}(t) \\
&&+\sum_{r=j}^{n}(i\Lambda _{r}A_{r}))-(\sum_{r=j+1}^{n}K_{i\Lambda
_{r},\cdots ,i\Lambda _{n}}(t) \\
&&+\sum_{r=j}^{n}(i\Lambda _{r}A_{r}^{\ast }))K_{i\Lambda _{j},\cdots
,i\Lambda _{n}}(t)-K_{i\Lambda _{j},\cdots ,i\Lambda _{n}}^{2}(t), \\
&&t\in \lbrack 0,t_{j}], \\
K_{i\Lambda _{j},\cdots ,i\Lambda _{n}}(t_{j}) &=&0,
\end{eqnarray*}%
where
\begin{eqnarray*}
C_{i\Lambda _{j},\cdots ,i\Lambda _{n}}(t):= &&\Lambda _{j}^{2}A_{j}^{\ast
}A_{j}-i\Lambda _{j}[(\sum_{r=j+1}^{n}K_{i\Lambda _{r},\cdots ,i\Lambda
_{n}}(t)+\sum_{r=j+1}^{n}(i\Lambda _{r}A_{r}^{\ast }))A_{j} \\
&&+A_{j}^{\ast }(\sum_{r=j+1}^{n}K_{i\Lambda _{r},\cdots ,i\Lambda
_{n}}(t)+\sum_{r=j+1}^{n}(i\Lambda _{r}A_{r}))],\ \ \ t\in \lbrack 0,t_{j}].
\end{eqnarray*}%
Moreover, $\{H_{i\Lambda _{j},\cdots ,i\Lambda _{n}}(t):t\in \lbrack
0,t_{j}],j=1,\cdots ,n\}$ is the solution of the system of $n$ independent
linear matrix ODEs:
\begin{eqnarray*}
\frac{d}{dt}H_{i\Lambda _{j},\cdots ,i\Lambda _{n}}(t)
&=&(\sum_{r=j}^{n}K_{i\Lambda _{r},\cdots ,i\Lambda
_{n}}(t)+\sum_{r=j}^{n}(i\Lambda _{r}A_{r}))H_{i\Lambda _{j},\cdots
,i\Lambda _{n}}(t), \\
&&t\in \lbrack 0,t_{j}], \\
H_{i\Lambda _{j},\cdots ,i\Lambda _{n}}(0) &=&Id, \\
&&j=1,\cdots ,n,
\end{eqnarray*}%
and $\{\mu _{j}:j=1,\cdots ,n\}$ is defined recursively as
\begin{eqnarray*}
\mu _{n}:= &&\gamma _{n}, \\
\mu _{j}:= &&\gamma _{j}+H_{i\Lambda _{j+1},\cdots ,i\Lambda _{n}}^{\ast
-1}(t_{j})H_{i\Lambda _{j+1},\cdots ,i\Lambda _{n}}^{\ast }(t_{j+1})\mu
_{j+1}, \\
&&j=n-1,\cdots ,1.
\end{eqnarray*}
\end{theorem}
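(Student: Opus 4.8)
The plan is to adapt the Helmes--Schwane device of converting the complex exponential into an exponential (local) martingale and solving deterministic matrix ODEs, but to run it \emph{piecewise} over the partition $0=t_0<\cdots<t_n$ so that the recursive system emerges naturally. First I would reduce the area sum to a single It\^o integral: interchanging the order of integration,
\[
\sum_{k=1}^n i\Lambda_k L_{t_k}^{A_k}=\int_0^{t_n}\langle \Phi(s)W_s,dW_s\rangle,\qquad \Phi(s):=\sum_{r:\,t_r\ge s} i\Lambda_r A_r,
\]
so $\Phi$ is piecewise constant, equal to $\sum_{r=j}^n i\Lambda_r A_r$ on $(t_{j-1},t_j)$; this is the origin of the sums $\sum_{r=j}^n i\Lambda_r A_r$ appearing in the statement.

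Next I would introduce the continuous process
\[
M_s=\exp\Big(\int_0^s\langle \Phi W,dW\rangle+\sum_{k:\,t_k\le s} i\langle\gamma_k,W_{t_k}\rangle+\tfrac12\langle \mathcal K(s)W_s,W_s\rangle+\langle h(s),W_s\rangle+c(s)\Big),
\]
with $\mathcal K$ symmetric and continuous, $c$ continuous, and $h$ allowed to jump at the $t_k$. Writing $B:=\Phi+\mathcal K$ and applying It\^o's formula on each open interval, the requirement that $M$ have zero drift forces, upon matching the quadratic, linear and constant parts in $W_s$,
\[
\dot{\mathcal K}=-B^\ast B,\qquad \dot h=-B^\ast h,\qquad \dot c=-\tfrac12 Tr(\mathcal K)-\tfrac12\langle h\rangle^2 .
\]
Continuity of $M$ across $t_k$ (the exponent's jump $i\langle\gamma_k,W_{t_k}\rangle$ must be absorbed) forces $h(t_k^+)=h(t_k^-)-i\gamma_k$, and the terminal requirement that $M_{t_n}$ equal $e^{c(t_n)}$ times the target integrand forces $\mathcal K(t_n)=0$ and $h(t_n^+)=0$. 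The fundamental matrix of $\dot H=BH$, $H(0)=Id$, is exactly $H_{i\Lambda_j,\dots,i\Lambda_n}$, and $\dot h=-B^\ast h$ makes $H^\ast h$ constant on each interval, so $h$ propagates backward through $H^{\ast-1}$.

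The decisive algebraic step is a superposition identity for the Riccati solution. Solving $\dot{\mathcal K}=-(\Phi+\mathcal K)^\ast(\Phi+\mathcal K)$ backward from $t_n$, I claim that on $[t_{j-1},t_j]$ one has $\mathcal K(s)=\sum_{r=j}^n K_{i\Lambda_r,\dots,i\Lambda_n}(s)$, where each summand solves the recursive Riccati equation of the statement with terminal value $K_{i\Lambda_r,\dots}(t_r)=0$. I would verify this by substitution: peeling off one index at a time and setting $F:=i\Lambda_nA_n+K_{i\Lambda_n}$, all cross terms between the $j$-th summand and the partial sum $\sum_{r>j}K_{i\Lambda_r,\dots}+\sum_{r>j}i\Lambda_rA_r$ cancel exactly against the terms packaged in $C_{i\Lambda_j,\dots,i\Lambda_n}$. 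The terminal conditions $K_{i\Lambda_j,\dots}(t_j)=0$ are precisely what makes $\mathcal K$ continuous across $t_j$ (the expiring summand vanishes there) and yield $\int_0^{t_n}Tr(\mathcal K)=\sum_j\int_0^{t_j}Tr(K_{i\Lambda_j,\dots})$. Propagating the jump $i\gamma_j$ backward through the interval-$(j+1)$ fundamental matrix gives $h(t_j^-)=i\mu_j$ with exactly $\mu_j=\gamma_j+H^{\ast-1}_{i\Lambda_{j+1},\dots}(t_j)H^\ast_{i\Lambda_{j+1},\dots}(t_{j+1})\mu_{j+1}$ and $\mu_n=\gamma_n$, so that $h(s)=iH^{\ast-1}_{i\Lambda_j,\dots}(s)H^\ast_{i\Lambda_j,\dots}(t_j)\mu_j$ on $[t_{j-1},t_j]$. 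Taking expectations in $\mathbb E[M_{t_n}]=M_0=e^{c(0)}$ gives $f=e^{c(0)-c(t_n)}=\exp\int_0^{t_n}\big(\tfrac12 Tr(\mathcal K)+\tfrac12\langle h\rangle^2\big)ds$; inserting the trace split and $\tfrac12\langle h(s)\rangle^2=-\tfrac12\langle H^{\ast-1}_{i\Lambda_j,\dots}(s)H^\ast_{i\Lambda_j,\dots}(t_j)\mu_j\rangle^2$ (the minus sign from $i^2=-1$) turns this into the advertised product over $j$.

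The main obstacle I anticipate is analytic rather than algebraic: justifying that the complex exponential $M$ is a genuine martingale, so that $\mathbb E[M_{t_n}]=M_0$. Since the matrix Riccati equation may blow up in finite time and the exponent is complex, uniform integrability is not automatic. I would first establish the identity for parameters in a range where the Riccati solution exists on all of $[0,t_n]$ and the underlying Gaussian integrals converge --- realizing $M$ through an honest Cameron--Martin--Girsanov change of measure for a suitably perturbed real problem, in the spirit of \cite{MR724703} --- and then extend to all $(\gamma_1,\dots,\gamma_n;\Lambda_1,\dots,\Lambda_n)$ by analytic continuation, both sides being holomorphic in the parameters wherever defined. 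Keeping the signs and the $\mu_j$ bookkeeping consistent across the jumps is the other place demanding care.
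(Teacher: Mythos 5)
Your proposal is correct, and the algebra does check out against the statement: the single backward Riccati flow $\dot{\mathcal K}=-(\Phi+\mathcal K)^{\ast}(\Phi+\mathcal K)$ with $\mathcal K(t_n)=0$ does decompose on $[t_{j-1},t_j]$ as $\mathcal K=\sum_{r=j}^{n}K_{i\Lambda_r,\cdots,i\Lambda_n}$ (expanding $(U+V)^{\ast}(U+V)$ with $U=\sum_{r>j}(K_r+i\Lambda_rA_r)$, $V=K_j+i\Lambda_jA_j$ reproduces exactly the stated $C_{i\Lambda_j,\cdots,i\Lambda_n}$ and the cross terms), the terminal conditions $K_j(t_j)=0$ give continuity of $\mathcal K$ and the trace split, and the jump recursion $h(t_j^-)=h(t_j^+)+i\gamma_j$ propagated by $H^{\ast}h=\mathrm{const}$ yields precisely the $\mu_j$ recursion and the sign $-\tfrac12\langle\cdot\rangle^2$. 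Your route is, however, organized quite differently from the paper's. The paper never writes a single exponential martingale: it eliminates the $n$ stochastic integrals one at a time by $n$ successive Girsanov transformations (from $P$ to $P_n$, $\widetilde{P_n}$, $P_{n-1}$, $\widetilde{P_{n-1}}$, \dots), introducing one Riccati equation per step to absorb, via It\^o's formula applied to $W^{\ast}K_j(t)W$, the quadratic-variation term created by each change of measure; the recursive system is thus generated step by step rather than obtained by decomposing a global Riccati solution. Likewise, the $\gamma_k$ terms are handled in the paper not by jumps in a linear coefficient $h$ but by solving the linear SDE for $W$ under each new measure, $W_t=H(t)\int_0^tH^{-1}(s)\,d\widetilde{W_s}$, and factoring out the independent Gaussian integrals $\int_{t_{j-1}}^{t_j}H^{-1}\,d\widetilde{W_s}$, whose characteristic functions are the factors $R_j$. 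What your formulation buys is transparency: the piecewise-constant $\Phi$, the superposition identity, and the backward transport of $h$ make it clear why the answer has the stated recursive structure. What the paper's formulation buys is exactly the analytic point you flag as the main obstacle: for small real $\lambda_j$ each factor is an honest Girsanov density (integrability supplied by the Cauchy--Schwarz/Novikov argument of its Lemma 2.1), so $\mathbb E[M_{t_n}]=M_0$ never has to be argued for a complex local martingale directly; the extension to $i\Lambda_j$ is then done, exactly as you propose, by holomorphy in the parameters and the identity theorem (the paper's Lemma 2.3). So your plan is sound, but to close it rigorously you would in effect have to carry out the paper's iterated change-of-measure argument anyway as the ``realization'' step; that step, not the ODE bookkeeping, is where the real work lies.
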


The subscripts of $K$ and $H$ are used to indicate the dependence on $%
\Lambda $. We do not wish to address the existence and uniqueness of the
preceeding maxtrix differential system of Riccati type. Under our context,
there is a unique solution to the system appearing in Theorem \ref{mth1},
for details, the reader may consult \cite{MR0108628}, \cite{MR0015610}, \cite%
{MR0357936} and etc.

Now we are going to briefly express the main idea of the proof in an
informal way.

The fundamental tool of the proof is Girsanov's transformation. However, it
fits for real-valued processes only. Hence we focus on the real case first,
that is, consider the functinal
\begin{equation*}
g(\gamma_{1},\cdots,\gamma_{n};\lambda_{1},\cdots,\lambda_{n}):=\mathbb{E}%
[\exp\{\sum_{k=1}^{n}i\langle\gamma_{k},W_{t_{k}}\rangle+\sum_{k=1}^{n}%
\lambda_{k}L_{t_{k}}^{A_{k}}\}]
\end{equation*}
first for small $\lambda_{1},\cdots,\lambda_{n}\in\mathbb{R},$ and then try
to complexify the result by standard arguments in complex analysis.

Our idea of computing the functional $g$ is to eliminate the stochastic
integrals one by one, starting from the largest time interval $[0,t_{n}],$
by using the Girsanov's transformation theorem, and try to track the
original Brownian motion along each time of transformation in order to
handle the $W$$.$ When applying change of measure, a stochastic integral
(with respect to a proper Brownian motion) is transformed to one-half of its
quadratic variation, which is a Lebesgue integral almost surely. To handle
this term, we introduce a symmetric matrix Riccati equation to split it into
three terms, which will be transformed to a deterministic one by change of
measure again and applying It\^{o}'s formula for a suitable process.
Recursively, we will be able to cancel out all of the $n$ stochastic
integrals in $g$, and obtain a recursive system of symmetric matrix Riccati
equations. To handle the $W$ term, we need to track the change of $W$ along
every time of transformation. From Girsanov's theorem, we will see that the
diffusion form of $W$ under each transformation is actually invariant (it is
always Gaussian by solving a linear SDE), which will enable us to do the
computation easily. This procedure will lead us to a system of independent
first order linear matrix ODEs.

Our interest in looking for an (as far as possible) explicit formula for the
joint law of Brownian motion together with its L\'{e}vy area is motivated by
the recent progress in the understanding of the solutions of It\^{o}'s
stochastic differential equations revealed recently in T. Lyons' work {\cite%
{MR1654527}}, in which it has been demonstrated that a large class of Wiener
functionals (including It\^{o} solutions to SDEs) are continuous with
respect to Brownian sample paths \emph{and} their L\'{e}vy areas equipped
with the law of Brownian motion together with the L\'{e}vy area process. For
details about these developments, see T. Lyons {\cite{MR1654527}}, T. Lyons
and Z. Qian {\cite{MR2036784}}, P.K. Friz and N.B. Victoir \cite{MR2604669}.

\section{Proof of the Main Result}

Now we are going to work out the idea in section 1. The procedure is to
handle the real case first and then use complexification.

The following technical lemma is important (see also \cite{MR724703}), which
gives us finiteness in the real case.

\begin{lemma}
There exists a number $c>0$, such that
\begin{equation*}
\sup\{\mathbb{E}[\exp\{\sum_{k=1}^{n}\lambda_{k}L_{t_{k}}^{A_{k}}\}]:%
\lambda_{j}\in(-c,c),j=1,2,\cdots,n\}<\infty.
\end{equation*}
\end{lemma}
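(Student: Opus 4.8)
The plan is to establish uniform integrability of the exponential of the sum of stochastic area processes by reducing the quadratic-in-$W$ integrands to a controllable Gaussian computation. The key observation is that each $L_{t_k}^{A_k} = \int_0^{t_k}\langle A_k W_s, dW_s\rangle$ is a quadratic Wiener functional, and by the polarization identity together with the symmetric part of $A_k$, one can write $\sum_{k=1}^n \lambda_k L_{t_k}^{A_k}$ in terms of second-order chaos. The most elementary route is to bound the whole sum by a single quadratic form in a finite-dimensional Gaussian vector obtained from a suitable discretization or Karhunen-Lo\`eve-type expansion, and then invoke the standard fact that $\mathbb{E}[\exp(Q)]<\infty$ for a Gaussian quadratic form $Q$ precisely when the relevant operator has spectral radius below the critical threshold.

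First I would fix the largest time $t_n$ and work on $[0,t_n]$. I would apply It\^o's formula to rewrite each $L_{t_k}^{A_k}$ so that the $dW_s$ integrand is expressed through the skew-symmetric part of $A_k$ only (the symmetric part contributes a term of the form $\tfrac12\langle A_k^{\mathrm{sym}} W, W\rangle$ plus a bounded-variation correction), thereby controlling the diagonal contribution. Then, since the processes are stopped at times $t_k \le t_n$, I would bound $\bigl|\sum_k \lambda_k L_{t_k}^{A_k}\bigr| \le (\max_k |\lambda_k|)\,\|A\|\,\sup_{s\le t_n}|W_s|\cdot(\text{total variation terms})$ where the genuinely quadratic part is handled separately. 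The essential analytic input is the exponential integrability of $\sup_{s\le t_n}|W_s|^2$ only up to a critical constant, so the bound $c$ must be chosen small enough that $(\max_k|\lambda_k|)\cdot C(t_n, A_1,\dots,A_n)$ stays below that threshold.

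A cleaner and more robust alternative, which I would actually favor, is to use Novikov's condition / the Girsanov framework already announced in the introduction. Consider the process $M_t := \exp\bigl(\sum_k \lambda_k \int_0^{t\wedge t_k}\langle A_k W_s, dW_s\rangle\bigr)$; applying It\^o's formula to the stochastic area integral converts it into a local martingale times an exponential-of-quadratic correction term, and the martingale part has expectation at most one by the supermartingale property. It then suffices to bound $\mathbb{E}\bigl[\exp\bigl(c'\int_0^{t_n}\langle \Gamma(s)W_s, W_s\rangle\,ds\bigr)\bigr]$ uniformly for $\lambda_j\in(-c,c)$, where $\Gamma(s)$ is a bounded matrix-valued function built from the $A_k$. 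This last expectation is finite by the classical exponential-moment estimate for integrals of Gaussian quadratic functionals, again provided $c$ is small relative to $t_n$ and $\max_k\|A_k\|$.

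The main obstacle I anticipate is pinning down the correct smallness threshold for $c$ and verifying uniformity over the whole cube $\lambda_j\in(-c,c)$ simultaneously rather than one coordinate at a time, since the several stochastic areas overlap on the nested intervals $[0,t_k]$ and their cross terms could in principle amplify the effective quadratic coefficient. Concretely, the danger is that the operator norm of the combined quadratic form scales with $n$ and with $\max_j t_j$, so I would need an estimate of the form: the largest eigenvalue of the associated symmetric operator is at most $C(n, t_n)\max_k\|A_k\|\,\max_k|\lambda_k|$, and then choose $c < 1/(2\,C(n,t_n)\,\max_k\|A_k\|)$. Once that single spectral bound is secured, the supremum over the open cube is automatically finite because the integrability criterion is an open condition and the bound is monotone in $\max_k|\lambda_k|$.
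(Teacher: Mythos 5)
Your third paragraph (the route you say you favor) is essentially the paper's proof: complete the Girsanov exponent inside the expectation, use the supermartingale property of the exponential local martingale, and reduce everything to the exponential integrability of $\int_0^{t}|A W_s|^2\,ds$ for small $\lambda$. Two corrections, though. First, as written, the step ``the martingale part has expectation at most one \ldots\ it then suffices to bound $\mathbb{E}[\exp(c'\int_0^{t_n}\langle \Gamma(s)W_s,W_s\rangle\,ds)]$'' is a non sequitur: the exponential supermartingale and the quadratic correction factor are multiplied together and are \emph{not} independent, so you cannot pass from the expectation of the product to the product of expectations. The paper fixes this with Cauchy--Schwarz, doubling the Girsanov exponent so that the first factor $\exp\{2\lambda\int_0^t\langle AW_s,dW_s\rangle-\frac{(2\lambda)^2}{2}\int_0^t|AW_s|^2ds\}$ is again an exponential supermartingale with expectation at most one, leaving $\mathbb{E}^{1/2}[\exp\{2\lambda^2\int_0^t|AW_s|^2ds\}]$ to control; that last quantity is finite for small $\lambda$ by Jensen's inequality (which reduces the time integral to $\sup_{s\in[0,t]}\mathbb{E}[\exp\{\lambda^2 t|AW_s|^2\}]$) and an elementary Gaussian computation. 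Second, your concern about cross terms among the nested intervals and an $n$-dependent spectral threshold is dispatched far more cheaply than you propose: the paper reduces to $n=1$ at the outset (generalized H\"older decouples the $n$ factors at the cost of replacing $\lambda_k$ by a constant multiple, which is absorbed into the smallness of $c$), so no eigenvalue estimate for the combined quadratic form is needed --- the lemma only asserts finiteness for \emph{some} $c>0$, which is an open, monotone condition. Your first two paragraphs (chaos decomposition, Karhunen--Lo\`eve, spectral radius criteria) would also work but are much heavier machinery than this statement requires.
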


\begin{proof}
It suffices to consider the case when $n=1$. By Cauchy-Schwarz's inequality,
we have
\begin{eqnarray*}
\mathbb{E}[\exp\{\lambda L_{t}^{A}\}] & = & \mathbb{E}[\exp\{\lambda%
\int_{0}^{t}\langle AW_{s},dW_{s}\rangle\}] \\
& = & \mathbb{E}[\exp\{\lambda\int_{0}^{t}\langle
AW_{s},dW_{s}\rangle-\lambda^{2}\int_{0}^{t}|AW_{s}|^{2}ds\}+\lambda^{2}%
\int_{0}^{t}|AW_{s}|^{2}ds\}] \\
& \leqslant & \mathbb{E}^{1/2}[\exp\{2\lambda\int_{0}^{t}\langle
AW_{s},dW_{s}\rangle-\frac{(2\lambda)^{2}}{2}\int_{0}^{t}|AW_{s}|^{2}ds\}]%
\cdot \\
& & \mathbb{E}^{1/2}[\exp\{2\lambda^{2}\int_{0}^{t}|AW_{s}|^{2}ds\}].
\end{eqnarray*}
Notice that the first term inside the expectation $\mathbb{E}$ on the right
hand side is a local martingle, the finiteness of $\mathbb{E}[\exp\{\lambda
L_{t}^{A}\}]$ will follow immediately once we show that
\begin{equation*}
\mathbb{E}[\exp\{\lambda^{2}\int_{0}^{t}|AW_{s}|^{2}ds\}]<\infty
\end{equation*}
when $\lambda$ is small enough. By Jensen's inequality, it remains to show
that
\begin{equation*}
\sup_{s\in[0,t]}\mathbb{E}[\exp\{\lambda^{2}|AW_{s}|^{2}\}]<\infty
\end{equation*}
when $\lambda$ is small enough, which is obvious by simple calculation based
on Gaussian random variables.
\end{proof}

Now consider small $\lambda_{1},\cdots,\lambda_{n}\in\mathbb{R}$ as in lemma
1, $\gamma_{1},\cdots,\gamma_{n}\in\mathbb{R},$ and the functional
\begin{equation*}
g(\gamma_{1},\cdots,\gamma_{n};\lambda_{1},\cdots,\lambda_{n})=\mathbb{E}%
[\exp\{\sum_{k=1}^{n}i\langle\gamma_{k},W_{t_{k}}\rangle+\sum_{k=1}^{n}%
\lambda_{k}L_{t_{k}}^{A_{k}}\}]
\end{equation*}
defined in section one. The following proposition gives the formula of $g.$

\begin{proposition}
The function $g(\gamma_{1},\cdots,\gamma_{n};\lambda_{1},\cdots,\lambda_{n})$
is determined by
\begin{align*}
& g(\gamma_{1},\cdots,\gamma_{n};\lambda_{1},\cdots,\lambda_{n}) \\
= & \prod_{j=1}^{n}\exp\{\frac{1}{2}\int_{0}^{t_{j}}Tr(K_{\lambda_{j},%
\cdots,\lambda_{n}}(s))ds-\frac{1}{2}\int_{t_{j-1}}^{t_{j}}|H_{\lambda_{j},%
\cdots,\lambda_{n}}^{*-1}(s)H_{\lambda_{j},\cdots,\lambda_{n}}^{*}(t_{j})%
\mu_{j}|^{2}ds\}.
\end{align*}
Here $\{K_{\lambda_{j},\cdots,\lambda_{n}}(t):t\in[0,t_{j}],j=1,2,\cdots,n\}$
is defined recursively (starting from $j=n$) by the symmetric matrix Riccati
equation
\begin{eqnarray*}
\frac{d}{dt}K_{\lambda_{j},\cdots,\lambda_{n}}(t) & = &
C_{\lambda_{j},\cdots,\lambda_{n}}(t)-K_{\lambda_{j},\cdots,\lambda_{n}}(t)(%
\sum_{r=j+1}^{n}K_{\lambda_{r},\cdots,\lambda_{n}}(t)+\sum_{r=j}^{n}(%
\lambda_{r}A_{r})) \\
& &
-(\sum_{r=j+1}^{n}K_{\lambda_{r},\cdots,\lambda_{n}}(t)+\sum_{r=j}^{n}(%
\lambda_{r}A_{r}^{*}))K_{\lambda_{j},\cdots,\lambda_{n}}(t)-K_{\lambda_{j},%
\cdots,\lambda_{n}}^{2}(t), \\
& & t\in[0,t_{j}], \\
K_{\lambda_{j},\cdots,\lambda_{n}}(t_{j}) & = & 0,
\end{eqnarray*}
where
\begin{eqnarray*}
C_{\lambda_{j},\cdots,\lambda_{n}}(t): & = &
-\lambda_{j}^{2}A_{j}^{*}A_{j}-\lambda_{j}[(\sum_{r=j+1}^{n}K_{\lambda_{r},%
\cdots,\lambda_{n}}(t)+\sum_{r=j+1}^{n}(\lambda_{r}A_{r}^{*}))A_{j} \\
& &
+A_{j}^{*}(\sum_{r=j+1}^{n}K_{\lambda_{r},\cdots,\lambda_{n}}(t)+%
\sum_{r=j+1}^{n}(\lambda_{r}A_{r}))],\ \ \ t\in[0,t_{j}]
\end{eqnarray*}
is also defined recursively starting from $j=n$. Moreover, $%
\{H_{\lambda_{j},\cdots,\lambda_{n}}(t):t\in[0,t_{j}],j=1,2,\cdots,n\}$ is
the solution of the system of $n$ independent first order linear matrix ODEs
\begin{eqnarray*}
\frac{d}{dt}H_{\lambda_{j},\cdots,\lambda_{n}}(t) & = &
(\sum_{r=j}^{n}K_{\lambda_{r},\cdots,\lambda_{n}}(t)+\sum_{r=j}^{n}(%
\lambda_{j}A_{j}))H_{\lambda_{j},\cdots,\lambda_{n}},\ \ \ t\in[0,t_{j}], \\
H_{\lambda_{j},\cdots,\lambda_{n}}(0) & = & Id, \\
& & j=1,2,\cdots,n,
\end{eqnarray*}
and $\{\mu_{j}:j=1,2,\cdots,n\}$ is defined recursively by
\begin{eqnarray*}
\mu_{n} & := & \gamma_{n}, \\
\mu_{j} & := &
\gamma_{j}+H_{\lambda_{j+1},\cdots,\lambda_{n}}^{*-1}(t_{j})H_{%
\lambda_{j+1},\cdots,\lambda_{n}}^{*}(t_{j+1})\mu_{j+1},\ \ \
j=1,2,\cdots,n-1.
\end{eqnarray*}
\end{proposition}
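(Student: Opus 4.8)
The plan is to remove the $n$ stochastic integrals one at a time, working from the longest interval $[0,t_n]$ down to $[0,t_1]$ by repeated use of Girsanov's theorem, while tracking how the driving Brownian motion is deformed at each stage. Throughout write $K_{(j)}:=K_{\lambda_j,\dots,\lambda_n}$, $H_{(j)}:=H_{\lambda_j,\dots,\lambda_n}$ and set $G_j(t):=\sum_{r=j}^{n}K_{(r)}(t)+\sum_{r=j}^{n}\lambda_rA_r$, with the conventions $G_{n+1}:=0$ and $\mathbb{Q}_{n+1}:=\mathbb{P}$. I would establish, by backward induction on $j=n,n-1,\dots,1$, the following reduction: after the levels $n,\dots,j$ have been processed one has passed to a measure $\mathbb{Q}_j$ under which $W$ solves the linear equation $dW_t=G_j(t)W_t\,dt+d\widehat W^{(j)}_t$ on $[0,t_j]$ for some $\mathbb{Q}_j$-Brownian motion $\widehat W^{(j)}$; the surviving functional contains the single linear term $i\langle\mu_j,W_{t_j}\rangle$; and the factor $\exp\{\tfrac12\int_0^{t_j}Tr(K_{(j)}(s))\,ds-\tfrac12\int_{t_{j-1}}^{t_j}|H_{(j)}^{*-1}(s)H_{(j)}^{*}(t_j)\mu_j|^2\,ds\}$ has been split off. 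Since the initial data are $\mathbb{Q}_{n+1}=\mathbb{P}$, $\mu_n=\gamma_n$ and $G_{n+1}=0$, and the final step ($j=1$, $t_0=0$) leaves no residual linear term, the product of the factors peeled off at the $n$ levels is exactly the asserted expression for $g$.

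For the step from $j+1$ to $j$ I would first isolate the integral that is ``new'' on $[0,t_j]$, namely $\lambda_j\int_0^{t_j}\langle A_jW_s,dW_s\rangle$. Under $\mathbb{Q}_{j+1}$ this splits as a $\widehat W^{(j+1)}$-stochastic integral plus the drift $\lambda_j\int_0^{t_j}\langle A_jW_s,G_{j+1}(s)W_s\rangle\,ds$. Writing the martingale part as a Dol\'eans--Dade exponential $\mathcal{E}$ and changing measure converts $\exp\{\cdots\}$ into $\mathcal{E}\cdot\exp\{\tfrac12\lambda_j^2\int_0^{t_j}|A_jW_s|^2ds+\lambda_j\int_0^{t_j}\langle A_jW_s,G_{j+1}W_s\rangle ds\}$ and raises the drift of $W$ by $\lambda_jA_jW$. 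To eliminate the two quadratic-in-$W$ Lebesgue integrals thus created I apply It\^o's formula to $\tfrac12W_t^{*}K_{(j)}(t)W_t$ on $[0,t_j]$; because $K_{(j)}(t_j)=0$ and $W_0=0$ the boundary contributions vanish, leaving an identity that equates $0$ with a further quadratic integral, the trace integral $\tfrac12\int_0^{t_j}Tr(K_{(j)})\,ds$, and a new martingale $\int_0^{t_j}(K_{(j)}W)^{*}d\widehat W$. Absorbing this last martingale by one more change of measure adds the quadratic term $\tfrac12\int_0^{t_j}W^{*}K_{(j)}^2W\,ds$ and brings the total drift to $G_j$, i.e.\ produces $\mathbb{Q}_j$. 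Collecting every quadratic-in-$W$ Lebesgue integral, its symmetric matrix coefficient is, after symmetrisation, exactly $\tfrac12$ times $\frac{d}{dt}K_{(j)}$ minus the right-hand side of the $j$-th Riccati equation (the cross term $\lambda_j(A_j^{*}G_{j+1}+G_{j+1}^{*}A_j)$ being precisely $-C_{(j)}-\lambda_j^2A_j^{*}A_j$, and the arguments $\sum_{r=j+1}^n K_{(r)}+\sum_{r=j}^n\lambda_rA_r$ of the linear Riccati terms being $G_{j+1}+\lambda_jA_j$); hence the terminal-value problem defining $K_{(j)}$ is exactly the requirement that this coefficient vanish, whereupon only the trace integral survives.

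For the linear part, under $\mathbb{Q}_j$ the fundamental solution of $dX/dt=G_jX$, $X(0)=Id$, is $H_{(j)}$, so $W_t=H_{(j)}(t)\int_0^tH_{(j)}(s)^{-1}d\widehat W^{(j)}_s$ on $[0,t_j]$. Every surviving term other than $i\langle\mu_j,W_{t_j}\rangle$ depends only on times $\le t_{j-1}$, so the increments of $\widehat W^{(j)}$ on the top subinterval $[t_{j-1},t_j]$ enter only through $\xi:=H_{(j)}(t_j)\int_{t_{j-1}}^{t_j}H_{(j)}(s)^{-1}d\widehat W^{(j)}_s$, which is independent of the remainder. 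Integrating out this Gaussian factor gives $\mathbb{E}[e^{i\langle\mu_j,\xi\rangle}]=\exp\{-\tfrac12\int_{t_{j-1}}^{t_j}|H_{(j)}^{*-1}(s)H_{(j)}^{*}(t_j)\mu_j|^2ds\}$, the Gaussian term above. The leftover part of $i\langle\mu_j,W_{t_j}\rangle$ equals $i\langle\mu_j,H_{(j)}(t_j)H_{(j)}(t_{j-1})^{-1}W_{t_{j-1}}\rangle=i\langle H_{(j)}^{*-1}(t_{j-1})H_{(j)}^{*}(t_j)\mu_j,\,W_{t_{j-1}}\rangle$, which merges with $i\langle\gamma_{j-1},W_{t_{j-1}}\rangle$ to yield the next coefficient $\mu_{j-1}=\gamma_{j-1}+H_{(j)}^{*-1}(t_{j-1})H_{(j)}^{*}(t_j)\mu_j$, exactly the stated recursion, and reduces the problem to $[0,t_{j-1}]$. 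This closes the induction.

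The principal obstacle will be justifying that the several exponential local martingales $\mathcal{E}$ used for the changes of measure are true martingales, so that each Girsanov step is legitimate and the expectations factor as claimed; this is where the restriction to small $\lambda_1,\dots,\lambda_n$ and the uniform exponential integrability furnished by the preceding Lemma enter, via a Novikov- or Kazamaki-type estimate applied at each stage. A secondary delicate point is the localisation used above---showing rigorously that at each level the Brownian increments on $[t_{j-1},t_j]$ decouple from all remaining terms so that the Gaussian integration is valid---together with the careful symmetrisation required to identify the quadratic coefficient with the Riccati data $C_{(j)}$ and the transport of the linear coefficients $\mu_j$. This establishes the formula for real $\lambda$; the complexification to $i\Lambda_j$ asserted in Theorem \ref{mth1} is then obtained by analytic continuation, as indicated in Section 1.
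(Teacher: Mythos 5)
Your proposal is correct and follows essentially the same route as the paper: backward induction over the time intervals, two Girsanov changes of measure per level, It\^o's formula applied to $W^{*}K_{(j)}W$ so that the Riccati terminal-value problem kills the quadratic Lebesgue integrals and leaves the trace term, and the linear-ODE representation $W_t=H_{(j)}(t)\int_0^tH_{(j)}(s)^{-1}d\widehat W^{(j)}_s$ to decouple the Gaussian increment on $[t_{j-1},t_j]$ and transport $\mu_j$ to $\mu_{j-1}$. The only organisational difference is that the paper first runs the whole recursion with $\gamma_1=\cdots=\gamma_n=0$ (Step one) and then superposes the tracking of the linear terms (Step two), whereas you interleave both in a single induction; the substance is identical.
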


\begin{proof}
We divide our proof into two steps.

(1) Step one.

Consider first that $\gamma _{1}=\cdots \gamma _{n}=0,$ and write
\begin{eqnarray*}
h(\lambda _{1},\cdots ,\lambda _{n}):= &&\mathbb{E}[\exp
\{\sum_{k=1}^{n}\lambda _{k}L_{t_{k}}^{A_{k}}\}]. \\
&=&\mathbb{E}[\exp \{\sum_{k=1}^{n}\lambda _{k}\int_{0}^{t_{k}}\langle
A_{k}W_{s},dW_{s}\rangle \}].
\end{eqnarray*}%
By changing the original probability measure $P$ on the largest time
interval $[0,t_{n}]$, we have
\begin{eqnarray*}
h(\lambda _{1},\cdots ,\lambda _{n}) &=&\mathbb{E}[\exp
\{\sum_{k=1}^{n-1}\lambda _{k}\int_{0}^{t_{k}}\langle
A_{k}W_{s},dW_{s}\rangle +\lambda _{n}\int_{0}^{t_{n}}\langle
A_{n}W_{s},dW_{s}\rangle \\
&&-\frac{\lambda _{n}^{2}}{2}\int_{0}^{t_{n}}|A_{n}W_{s}|^{2}ds+\frac{%
\lambda _{n}^{2}}{2}\int_{0}^{t_{n}}|A_{n}W_{s}|^{2}ds\}] \\
&=&\mathbb{E}_{n}[\exp \{\sum_{k=1}^{n-1}\lambda _{k}\int_{0}^{t_{k}}\langle
A_{k}W_{s},dW_{s}\rangle +\frac{\lambda _{n}^{2}}{2}%
\int_{0}^{t_{n}}|A_{n}W_{s}|^{2}ds\}],
\end{eqnarray*}%
where $\mathbb{E}_{n}$ denotes the expectation under the probability measure
\begin{equation*}
dP_{n}:=\exp \{\lambda _{n}\int_{0}^{t_{n}}\langle A_{n}W_{s},dW_{s}\rangle -%
\frac{\lambda _{n}^{2}}{2}\int_{0}^{t_{n}}|A_{n}W_{s}|^{2}ds\}dP.
\end{equation*}%
Notice that $\{W_{t}:t\in \lbrack 0,t_{n}]\}$ may not be a Brownian motion
under the new measure $P_{n}.$ However, by the Girsanov's theorem, under $%
P_{n},$ the process
\begin{equation*}
W_{t}^{(n)}:=W_{t}-\lambda _{n}\int_{0}^{t}A_{n}W_{s}ds,\ \ \ t\in \lbrack
0,t_{n}],
\end{equation*}%
is a Brownian motion, and the original process $\{W_{t}:t\in \lbrack
0,t_{n}]\}$ satisfies the following SDE:
\begin{equation*}
dW_{t}=\lambda _{n}A_{n}W_{t}dt+dW_{t}^{(n)},\ \ \ t\in \lbrack 0,t_{n}].
\end{equation*}%
In order to eliminate the integral over $[0,t_{n}],$ let $C_{n}(t):=-\lambda
_{n}^{2}A_{n}^{\ast }A_{n}\ (t\in \lbrack 0,t_{n}])$ and introduce the
following matrix Riccati equation:
\begin{eqnarray*}
\frac{d}{dt}K_{\lambda _{n}}(t) &=&C_{n}(t)-\lambda _{n}[K_{\lambda
_{n}}(t)A_{n}+A_{n}^{\ast }K_{\lambda _{n}}(t)]-K_{\lambda _{n}}^{2}(t),\ \
\ t\in \lbrack 0,t_{n}], \\
K_{\lambda _{n}}(t_{n}) &=&0.
\end{eqnarray*}%
From now on, to simplify our notation, we will use $K_{n}$ to denote $%
K_{\lambda _{n}}$, and later we will also use $K_{j}$ to denote $K_{\lambda
_{j},\cdots ,\lambda _{n}},\ j=1,\cdots ,n-1$ to omit the indication on the
dependence on $\lambda .$ By symmetricity of the equation, the unique
solution $\{K_{n}(t):t\in \lbrack 0,t_{n}]\}$ is symmetric. Hence, by
substitution, we have
\begin{eqnarray*}
h(\lambda _{1},\cdots ,\lambda _{n}) &=&\mathbb{E}_{n}[\exp
\{\sum_{k=1}^{n-1}\lambda _{k}\int_{0}^{t_{k}}\langle
A_{k}W_{s},dW_{s}\rangle -\frac{1}{2}\int_{0}^{t_{n}}W_{s}^{\ast
}C_{n}(s)W_{s}ds\}] \\
&=&\mathbb{E}_{n}[\exp \{\sum_{k=1}^{n-1}\lambda _{k}\int_{0}^{t_{k}}\langle
A_{k}W_{s},dW_{s}\rangle -\frac{1}{2}\int_{0}^{t_{n}}W_{s}^{\ast }\frac{d}{ds%
}K_{n}(s)W_{s}ds \\
&&-\frac{1}{2}\int_{0}^{t_{n}}W_{s}^{\ast }[K_{n}(s)(\lambda
_{n}A_{n})+\lambda _{n}A_{n}^{\ast }K_{n}(s)]W_{s}ds \\
&&-\frac{1}{2}\int_{0}^{t_{n}}|K_{n}(s)W_{s}|^{2}ds\}] \\
&=&\mathbb{E}_{n}[\exp \{\sum_{k=1}^{n-1}\lambda _{k}\int_{0}^{t_{k}}\langle
A_{k}W_{s},dW_{s}\rangle -\frac{1}{2}\int_{0}^{t_{n}}W_{s}^{\ast }\frac{d}{ds%
}K_{n}(s)W_{s}ds \\
&&-\int_{0}^{t_{n}}\langle K_{n}(s)W_{s},\lambda _{n}A_{n}W_{s}\rangle ds-%
\frac{1}{2}\int_{0}^{t_{n}}|K_{n}(s)W_{s}|^{2}ds\}].
\end{eqnarray*}%
By changing of measure again,
\begin{eqnarray*}
h(\lambda _{1},\cdots ,\lambda _{n}) &=&\widetilde{\mathbb{E}_{n}}[\exp
\{\sum_{k=1}^{n-1}\lambda _{k}\int_{0}^{t_{k}}\langle
A_{k}W_{s},dW_{s}\rangle -\frac{1}{2}\int_{0}^{t_{n}}W_{s}^{\ast }\frac{d}{ds%
}K_{n}(s)W_{s}ds \\
&&-\int_{0}^{t_{n}}\langle K_{n}(s)W_{s},\lambda _{n}A_{n}W_{s}\rangle
ds-\int_{0}^{t_{n}}\langle K_{n}(s)W_{s},dW_{s}^{(n)}\rangle \}] \\
&=&\widetilde{\mathbb{E}_{n}}[\exp \{\sum_{k=1}^{n-1}\lambda
_{k}\int_{0}^{t_{k}}\langle A_{k}W_{s},dW_{s}\rangle -\frac{1}{2}%
\int_{0}^{t_{n}}W_{s}^{\ast }\frac{d}{ds}K_{n}(s)W_{s}ds \\
&&-\int_{0}^{t_{n}}\langle K_{n}(s)W_{s},\lambda _{n}A_{n}W_{s}\rangle ds \\
&&-\int_{0}^{t_{n}}\langle K_{n}(s)W_{s},dW_{s}-\lambda
_{n}A_{n}W_{s}ds\rangle \}] \\
&=&\widetilde{\mathbb{E}_{n}}[\exp \{\sum_{k=1}^{n-1}\lambda
_{k}\int_{0}^{t_{k}}\langle A_{k}W_{s},dW_{s}\rangle -\frac{1}{2}%
\int_{0}^{t_{n}}W_{s}^{\ast }\frac{d}{ds}K_{n}(s)W_{s}ds \\
&&-\int_{0}^{t_{n}}\langle K_{n}(s)W_{s},dW_{s}\rangle \}].
\end{eqnarray*}%
where $\widetilde{\mathbb{E}_{n}}$ denotes the expectation under the
probability measure
\begin{equation*}
d\widetilde{P_{n}}:=\exp \{\int_{0}^{t_{n}}\langle
K_{n}(s)W_{s},dW_{s}^{(n)}\rangle -\frac{1}{2}%
\int_{0}^{t_{n}}|K_{n}(s)W_{s}|^{2}ds\}dP_{n}.
\end{equation*}%
Under $\widetilde{P_{n}}$, the process
\begin{equation*}
\widetilde{W_{t}^{(n)}}:=W_{t}^{(n)}-\int_{0}^{t}K_{n}(s)W_{s}ds,\ \ \ t\in
\lbrack 0,t_{n}]
\end{equation*}%
is a Brownian motion, and the original process $\{W_{t}:t\in \lbrack
0,t_{n}]\}$ satisfies the following SDE:
\begin{equation*}
dW_{t}=(K_{n}(t)+\lambda _{n}A_{n})W_{t}dt+d\widetilde{W_{t}^{(n)}},\ \ \
t\in \lbrack 0,t_{n}].
\end{equation*}%
It should be pointed out that under $\widetilde{P_{n}}$, the quadratic
variation process of the semi-martingale $\{W_{t}:t\in \lbrack 0,t_{n}]\}$
is actually the same as that of the Browinian motion $\{\widetilde{%
W_{t}^{(n)}}:t\in \lbrack 0,t_{n}]\}.$ Now let $F(t,w):\ [0,t_{n}]\times
\mathbb{R}^{d}\rightarrow \mathbb{R}$ be defined as
\begin{equation*}
F(t,w):=w^{\ast }K_{n}(t)w,
\end{equation*}%
by applying It\^{o}'s formula to the process $\{F(t,W_{t}):t\in \lbrack
0,t_{n}]\},$ we have
\begin{equation*}
\int_{0}^{t_{n}}W_{s}^{\ast }\frac{d}{ds}K_{n}(s)W_{s}ds+2\int_{0}^{t_{n}}%
\langle K_{n}(s)W_{s},dW_{s}\rangle +\int_{0}^{t_{n}}Tr(K(s))ds=0,
\end{equation*}%
where $Tr$ denotes the trace operator. Therefore, we arrive at
\begin{equation*}
h(\lambda _{1},\cdots ,\lambda _{n})=\exp \{\frac{1}{2}%
\int_{0}^{t_{n}}Tr(K_{n}(s))ds\}\cdot \widetilde{\mathbb{E}_{n}}[\exp
\{\sum_{k=1}^{n-1}\lambda _{k}\int_{0}^{t_{k}}\langle
A_{k}W_{s},dW_{s}\rangle \}].
\end{equation*}

Now we proceed a similar argument over the second largest time interval $%
[0,t_{n-1}].$ The difference here is that $\{W_{t}:t\in \lbrack 0,t_{n-1}]\}$
is not a Brownian motion under the probability measure $\widetilde{P_{n}}.$
However, still by changing of measure, we have
\begin{align*}
& h(\lambda _{1},\cdots ,\lambda _{n})\exp \{-\frac{1}{2}%
\int_{0}^{t_{n}}Tr(K_{n}(s))ds\} \\
=& \widetilde{E_{n}}[\exp \{\sum_{k=1}^{n-2}\lambda
_{k}\int_{0}^{t_{k}}\langle A_{k}W_{s},dW_{s}\rangle +\lambda
_{n-1}\int_{0}^{t_{n-1}}\langle A_{n-1}W_{s},dW_{s}\rangle \}] \\
=& \widetilde{E_{n}}[\exp \{\sum_{k=1}^{n-2}\lambda
_{k}\int_{0}^{t_{k}}\langle A_{k}W_{s},dW_{s}\rangle +\lambda
_{n-1}\int_{0}^{t_{n-1}}\langle A_{n-1}W_{s},(K_{n}(s)+\lambda
_{n}A_{n})W_{s}\rangle ds \\
& +\lambda _{n-1}\int_{0}^{t_{n-1}}\langle A_{n-1}W_{s},d\widetilde{%
W_{s}^{(n)}}\rangle -\frac{\lambda _{n-1}^{2}}{2}%
\int_{0}^{t_{n-1}}|A_{n-1}W_{s}|^{2}ds \\
& +\frac{\lambda _{n-1}^{2}}{2}\int_{0}^{t_{n-1}}|A_{n-1}W_{s}|^{2}ds\}] \\
=& \mathbb{E}_{n-1}[\exp \{\sum_{k=1}^{n-2}\lambda
_{k}\int_{0}^{t_{k}}\langle A_{k}W_{s},dW_{s}\rangle +\lambda
_{n-1}\int_{0}^{t_{n-1}}\langle A_{n-1}W_{s},(K_{n}(s)+\lambda
_{n}A_{n})W_{s}\rangle ds \\
& +\frac{\lambda _{n-1}^{2}}{2}\int_{0}^{t_{n-1}}|A_{n-1}W_{s}|^{2}ds\}].
\end{align*}%
Here $\mathbb{E}_{n-1}$ is the expectation under the probability measure
\begin{equation*}
dP_{n-1}:=\exp \{\lambda _{n-1}\int_{0}^{t_{n-1}}\langle A_{n-1}W_{s},d%
\widetilde{W_{s}^{(n)}}\rangle -\frac{\lambda _{n-1}^{2}}{2}%
\int_{0}^{t_{n-1}}|A_{n-1}W_{s}|^{2}ds\}d\widetilde{P_{n}}.
\end{equation*}%
Under $P_{n-1},$ the process
\begin{equation*}
W_{t}^{(n-1)}:=\widetilde{W_{t}^{(n)}}-\lambda
_{n-1}\int_{0}^{t}A_{n-1}W_{s}ds,\ \ \ t\in \lbrack 0,t_{n-1}]
\end{equation*}%
is a Brownian motion, and the process $\{W_{t}:t\in \lbrack 0,t_{n-1}]\}$
satisfies the following SDE:
\begin{equation*}
dW_{t}=(K_{n}(t)+\lambda _{n}A_{n}+\lambda
_{n-1}A_{n-1})W_{t}dt+dW_{t}^{(n-1)},\ \ \ t\in \lbrack 0,t_{n-1}].
\end{equation*}%
Let
\begin{eqnarray*}
C_{n-1}(t):= &&-\lambda _{n-1}^{2}A_{n-1}^{\ast }A_{n-1}-\lambda
_{n-1}[(K_{n}(t)+\lambda _{n}A_{n}^{\ast })A_{n-1} \\
&&+A_{n-1}^{\ast }(K_{n}(t)+\lambda _{n}A_{n})],\ \ \ t\in \lbrack
0,t_{n-1}],
\end{eqnarray*}%
and introduce the following matrix Riccati equation:
\begin{eqnarray*}
\frac{d}{dt}K_{n-1}(t) &=&C_{n-1}(t)-K_{n-1}(t)(K_{n}(t)+\lambda
_{n}A_{n}+\lambda _{n-1}A_{n-1}) \\
&&-(K_{n}(t)+\lambda _{n}A_{n}^{\ast }+\lambda _{n-1}A_{n-1}^{\ast
})K_{n-1}(t)-K_{n-1}^{2}(t),\ t\in \lbrack 0,t_{n-1}], \\
K_{n-1}(t_{n-1}) &=&0,
\end{eqnarray*}%
we have
\begin{align*}
& h(\lambda _{1},\cdots ,\lambda _{n})\exp \{-\frac{1}{2}%
\int_{0}^{t_{n}}Tr(K_{n}(s))ds\} \\
=& \mathbb{E}_{n-1}[\exp \{\sum_{k=1}^{n-2}\lambda
_{k}\int_{0}^{t_{k}}\langle A_{k}W_{s},dW_{s}\rangle -\frac{1}{2}%
\int_{0}^{t_{n-1}}W_{s}^{\ast }C_{n-1}(s)W_{s}ds\}] \\
=& \mathbb{E}_{n-1}[\exp \{\sum_{k=1}^{n-2}\lambda
_{k}\int_{0}^{t_{k}}\langle A_{k}W_{s},dW_{s}\rangle -\frac{1}{2}%
\int_{0}^{t_{n-1}}W_{s}^{\ast }\frac{d}{ds}K_{n-1}(s)W_{s}ds \\
& -\int_{0}^{t_{n-1}}\langle K_{n-1}(s)W_{s},(K_{n}(s)+\lambda
_{n}A_{n}+\lambda _{n-1}A_{n-1})W_{s}\rangle ds \\
& -\frac{1}{2}\int_{0}^{t_{n-1}}|K_{n-1}(s)W_{s}|^{2}ds\}] \\
=& \widetilde{E_{n-1}}[\exp \{\sum_{k=1}^{n-2}\lambda
_{k}\int_{0}^{t_{k}}\langle A_{k}W_{s},dW_{s}\rangle -\frac{1}{2}%
\int_{0}^{t_{n-1}}W_{s}^{\ast }\frac{d}{ds}K_{n-1}(s)W_{s}ds \\
& -\int_{0}^{t_{n-1}}\langle K_{n-1}(s)W_{s},(K_{n}(s)+\lambda
_{n}A_{n}+\lambda _{n-1}A_{n-1})W_{s}\rangle ds \\
& -\int_{0}^{t_{n-1}}\langle K_{n-1}(s)W_{s},dW_{s}^{(n-1)}\rangle \}] \\
=& \widetilde{E_{n-1}}[\exp \{\sum_{k=1}^{n-2}\lambda
_{k}\int_{0}^{t_{k}}\langle A_{k}W_{s},dW_{s}\rangle -\frac{1}{2}%
\int_{0}^{t_{n-1}}W_{s}^{\ast }\frac{d}{ds}K_{n-1}(s)W_{s}ds \\
& -\int_{0}^{t_{n-1}}\langle K_{n-1}(s)W_{s},dW_{s}\rangle \}].
\end{align*}%
Here we have changed the probability measure from $P_{n-1}$ to
\begin{equation*}
d\widetilde{P_{n-1}}:=\exp \{\int_{0}^{t_{n-1}}\langle
K_{n-1}(s)W_{s},dW_{s}^{(n-1)}\rangle -\frac{1}{2}%
\int_{0}^{t_{n-1}}|K_{n-1}(s)W_{s}|^{2}ds\}dP_{n-1}.
\end{equation*}%
By applying It\^{o}'s formula to the process $\{W_{t}^{\ast
}K_{n-1}(t)W_{t}:t\in \lbrack 0,t_{n-1}]\}$ and noticing that the quadratic
variation process of the semi-martingale $\{W_{t}:t\in \lbrack 0,t_{n-1}]\}$
is the same as that of a Brownian motion, we again have
\begin{equation*}
\int_{0}^{t_{n-1}}W_{s}^{\ast }\frac{d}{ds}K_{n-1}(s)W_{s}ds+2%
\int_{0}^{t_{n-1}}\langle K_{n-1}(s)W_{s},dW_{s}\rangle
+\int_{0}^{t_{n-1}}Tr(K_{n-1}(s))ds=0.
\end{equation*}%
Therefore, we arrive at
\begin{align*}
& h(\lambda _{1},\cdots ,\lambda _{n})\exp \{-\frac{1}{2}%
\int_{0}^{t_{n}}Tr(K_{n}(s))ds\} \\
=& \exp \{\frac{1}{2}\int_{0}^{t_{n-1}}Tr(K_{n-1}(s))ds\}\cdot \widetilde{%
\mathbb{E}_{n-1}}[\exp \{\sum_{k=1}^{n-2}\lambda _{k}\int_{0}^{t_{k}}\langle
A_{k}W_{s},dW_{s}\rangle \}].
\end{align*}%
Now the recursion is quite obvious from the key observation that when
applying transformation of probability measures, the original process $%
\{W_{t}\}$(over the proper time interval) is always a diffusion of the same
kind, namely, it satisfies an SDE of the form
\begin{equation*}
dW_{t}=Q(t)W_{t}dt+dB_{t},
\end{equation*}%
where $\{B_{t}\}$ is a Brownian motion under the corresponding probability
measure. To be more precise, after $j$ steps, we will have a system of $j$
matrix Riccati equations defined recursively,
\begin{equation*}
\begin{cases}
C_{n}(t) & :=-\lambda _{n}^{2}A_{n}^{\ast }A_{n},\ \ \ t\in \lbrack 0,t_{n}],
\\
\frac{d}{dt}K_{n}(t) & =C_{n}(t)-\lambda _{n}[K_{n}(t)A_{n}+A_{n}^{\ast
}K_{n}(t)]-K_{n}^{2}(t),\ \ \ t\in \lbrack 0,t_{n}], \\
K_{n}(t_{n}) & =0;%
\end{cases}%
\end{equation*}%
\begin{equation*}
\begin{cases}
C_{n-1}(t) & :=-\lambda _{n-1}^{2}A_{n-1}^{\ast }A_{n-1}-\lambda
_{n-1}[(K_{n}(t)+\lambda _{n}A_{n}^{\ast })A_{n-1}+A_{n-1}^{\ast }(K_{n}(t)
\\
& +\lambda _{n}A_{n})],\ \ \ t\in \lbrack 0,t_{n-1}], \\
\frac{d}{dt}K_{n-1}(t) & =C_{n-1}(t)-K_{n-1}(t)(K_{n}(t)+\lambda
_{n}A_{n}+\lambda _{n-1}A_{n-1}) \\
& -(K_{n}(t)+\lambda _{n}A_{n}^{\ast }+\lambda _{n-1}A_{n-1}^{\ast
})K_{n-1}(t)-K_{n-1}^{2}(t),\ \ \ t\in \lbrack 0,t_{n-1}], \\
K_{n-1}(t_{n-1}) & =0;%
\end{cases}%
\end{equation*}%
\begin{equation*}
\begin{array}{c}
\cdot \\
\cdot \\
\cdot%
\end{array}%
\end{equation*}%
\begin{equation*}
\begin{cases}
C_{n-j+1}(t) & :=-\lambda _{n-j+1}^{2}A_{n-j+1}^{\ast }A_{n-j+1}-\lambda
_{n-j+1}[(\sum_{r=n-j+2}^{n}K_{r}(t) \\
& +\sum_{r=n-j+2}^{n}(\lambda _{r}A_{r}^{\ast }))A_{n-j+1}+A_{n-j+1}^{\ast
}(\sum_{r=n-j+2}^{n}K_{r}(t) \\
& +\sum_{r=n-j+2}^{n}(\lambda _{r}A_{r}))],\ \ \ t\in \lbrack 0,t_{n-j+1}],
\\
\frac{d}{dt}K_{n-j+1}(t) & =C_{n-j+1}(t)-K_{n-j+1}(t)(%
\sum_{r=n-j+2}^{n}K_{r}(t)+\sum_{r=n-j+1}^{n}(\lambda _{r}A_{r})) \\
& -(\sum_{r=n-j+2}^{n}K_{r}(t)+\sum_{r=n-j+1}^{n}(\lambda _{r}A_{r}^{\ast
}))K_{n-j+1}(t)-K_{n-j+1}^{2}(t), \\
& t\in \lbrack 0,t_{n-j+1}], \\
K_{n-j+1}(t_{n-j+1}) & =0,%
\end{cases}%
\end{equation*}%
and we will arrive at
\begin{align*}
& h(\lambda _{1},\cdots ,\lambda _{n}) \\
=& \exp \{\frac{1}{2}\sum_{r=n-j+1}^{n}\int_{0}^{t_{r}}Tr(K_{r}(s))ds\}\cdot
\widetilde{\mathbb{E}_{n-j+1}}[\exp \{\sum_{k=1}^{n-j}\lambda
_{k}\int_{0}^{t_{k}}\langle A_{k}W_{s},dW_{s}\rangle \}].
\end{align*}%
Here under the probability measure $\widetilde{P_{n-j+1}},$ $\{W_{t}:t\in
\lbrack 0,t_{n-j+1}]\}$ satisfies the SDE
\begin{equation*}
dW_{t}=(\sum_{r=n-j+1}^{n}K_{r}(t)+\sum_{r=n-j+1}^{n}(\lambda
_{r}A_{r}))W_{t}dt+d\widetilde{W_{t}^{(n-j+1)}},\ \ \ t\in \lbrack
0,t_{n-j+1}],
\end{equation*}%
where $\{\widetilde{W_{t}^{(n-j+1)}}:t\in \lbrack 0,t_{n-j+1}]\}$ is a
Brownian motion under $\widetilde{P_{n-j+1}}.$ By carrying out a similar
argument, that is, by changing measures and applying It\^{o}'s formula, we
will obtain
\begin{align*}
& h(\lambda _{1},\cdots ,\lambda _{n})\exp \{-\frac{1}{2}\sum_{r=n-j+1}^{n}%
\int_{0}^{t_{r}}Tr(K_{r}(s))ds\} \\
=& \exp \{\frac{1}{2}\int_{0}^{t_{n-j}}Tr(K_{n-j}(s))ds\}\widetilde{\mathbb{E%
}_{n-j}}[\exp \{\sum_{k=1}^{n-j-1}\lambda _{k}\int_{0}^{t_{k}}\langle
A_{k}W_{s},dW_{s}\rangle \}],
\end{align*}%
where $\{K_{n-j}(t):t\in \lbrack 0,t_{n-j}]\}$ is the solution of the matrix
Riccati equation
\begin{eqnarray*}
\frac{d}{dt}K_{n-j}(t)
&=&C_{n-j}(t)-K_{n-j}(t)(\sum_{r=n-j+1}^{n}K_{r}(t)+\sum_{r=n-j}^{n}(\lambda
_{r}A_{r})) \\
&&-(\sum_{r=n-j+1}^{n}K_{r}(t)+\sum_{r=n-j}^{n}(\lambda _{r}A_{r}^{\ast
}))K_{n-j}(t)-K_{n-j}^{2}(t), \\
&&t\in \lbrack 0,t_{n-j}], \\
K_{n-j}(t_{n-j}) &=&0,
\end{eqnarray*}%
in which
\begin{eqnarray*}
C_{n-j}(t):= &&-\lambda _{n-j}^{2}A_{n-j}^{\ast }A_{n-j}-\lambda
_{n-j}[(\sum_{r=n-j+1}^{n}K_{r}(t)+\sum_{r=n-j+1}^{n}(\lambda
_{r}A_{r}^{\ast }))A_{n-j} \\
&&+A_{n-j}^{\ast }(\sum_{r=n-j+1}^{n}K_{r}(t)+\sum_{r=n-j+1}^{n}(\lambda
_{r}A_{r}))],\ \ \ t\in \lbrack 0,t_{n-j}].
\end{eqnarray*}%
By induction on $j$, the proof of the case where $\gamma _{1}=\cdots =\gamma
_{n}=0$ is now complete.

(2) Step two.

Now we consider the case with $\gamma_{1},\cdots,\gamma_{n}\in\mathbb{R}$.
In step one, the ultimate goal of applying those transformations of
probability measures is to eliminate the stochastic integrals one by one,
starting from the largest time interval. After each transformation, the
distribution of $W_{t_{k}}$$(k=1,\cdots,n)$ is changed. In order to work out
the case involving $\gamma_{1},\cdots,\gamma_{n},$ we need to track the
original process $\{W_{t}:t\in[0,t_{n}]\}$ after each transformation. The
main difficulty comes from the fact that if we apply a transformation on $%
[0,t_{k}],$ the distribution of $\{W_{t}\}$ over $[t_{k},t_{n}]$ is hard to
compute. However, by using the crucial observation that the diffusion form
of $\{W_{t}\}$ is invariant, w can factor out the term over $[t_{k},t_{n}].$

Let's formulate the idea in detail. By using the same notation as in step
one, we have
\begin{align*}
& g(\gamma_{1},\cdots,\gamma_{n};\lambda_{1},\lambda_{n}) \\
= & \widetilde{\mathbb{E}_{n}}[\exp\{\sum_{k=1}^{n}i\langle%
\gamma_{k},W_{t_{k}}\rangle\}\cdot\Delta_{n}],
\end{align*}
where
\begin{equation*}
\Delta_{n}:=\exp\{\frac{1}{2}\int_{0}^{t_{n}}Tr(K_{n}(s))ds+\sum_{k=1}^{n-1}%
\lambda_{k}\int_{0}^{t_{k}}\langle A_{k}W_{s},dW_{s}\rangle\}.
\end{equation*}
Under $\widetilde{P_{n}},$ the process $\{W_{t}:t\in[0,t_{n}]\}$ is a
diffusion of the form
\begin{equation*}
dW_{t}=(K_{n}(t)+\lambda_{n}A_{n})W_{t}dt+d\widetilde{W_{t}^{(n)}},\ \ \ t\in%
[0,t_{n}].
\end{equation*}
Let $\{H_{\lambda_{n}}(t):t\in[0,t_{n}]\}$ be the solution of the following
linear matrix ODE
\begin{eqnarray*}
\frac{d}{dt}H_{\lambda_{n}}(t) & = &
(K_{n}(t)+\lambda_{n}A_{n})H_{\lambda_{n}}(t),\ \ \ t\in[0,t_{n}], \\
H_{\lambda_{n}}(0) & = & Id.
\end{eqnarray*}
Then by the explicit solution of linear SDE, we have
\begin{equation*}
W_{t}=H_{\lambda_{n}}(t)\int_{0}^{t}H_{\lambda_{n}}^{-1}(s)d\widetilde{%
W_{s}^{(n)}},\ \ \ t\in[0,t_{n}].
\end{equation*}
Hence
\begin{align*}
& g(\gamma_{1},\cdots,\gamma_{n};\lambda_{1},\lambda_{n}) \\
= & \widetilde{\mathbb{E}_{n}}[\exp\{\sum_{k=1}^{n-1}i\langle%
\gamma_{k},W_{t_{k}}\rangle+i\langle\gamma_{n},H_{\lambda_{n}}(t_{n})%
\int_{0}^{t_{n}}H_{\lambda_{n}}^{-1}(s)d\widetilde{W_{s}^{(n)}}%
\rangle\}\cdot\Delta_{n}] \\
= & \widetilde{\mathbb{E}_{n}}[\exp\{\sum_{k=1}^{n-1}i\langle%
\gamma_{k},W_{t_{k}}\rangle+i\langle\gamma_{n},H_{\lambda_{n}}(t_{n})%
\int_{0}^{t_{n-1}}H_{\lambda_{n}}^{-1}(s)d\widetilde{W_{s}^{(n)}}\rangle \\
&
+i\langle\gamma_{n},H_{\lambda_{n}}(t_{n})\int_{t_{n-1}}^{t_{n}}H_{%
\lambda_{n}}^{-1}(s)d\widetilde{W_{s}^{(n)}}\rangle\}\cdot\Delta_{n}].
\end{align*}
Notice that the stochastic integral $\int_{t_{n-1}}^{t_{n}}H_{%
\lambda_{n}}^{-1}(s)d\widetilde{W_{s}^{(n)}}$ is independent of the rest
since the integrand is deterministic, we have
\begin{align*}
& g(\gamma_{1},\cdots,\gamma_{n};\lambda_{1},\lambda_{n}) \\
= & \widetilde{\mathbb{E}_{n}}[\exp\{i\langle\gamma_{n},H_{%
\lambda_{n}}(t_{n})\int_{t_{n-1}}^{t_{n}}H_{\lambda_{n}}^{-1}(s)d\widetilde{%
W_{s}^{(n)}}\rangle\}]\cdot \\
& \widetilde{\mathbb{E}_{n}}[\exp\{\sum_{k=1}^{n-1}i\langle%
\gamma_{k},W_{t_{k}}\rangle+i\langle
H_{\lambda_{n}}^{*-1}(t_{n-1})H_{\lambda_{n}}^{*}(t_{n})%
\gamma_{n},W_{t_{n-1}}\rangle\}\cdot\Delta_{n}] \\
= & R_{n}\cdot\widetilde{\mathbb{E}_{n}}[\exp\{\sum_{k=1}^{n-2}i\langle%
\gamma_{k},W_{t_{k}}\rangle+\langle\mu_{n-1},W_{t_{n-1}}\rangle\}\cdot%
\Delta_{n}],
\end{align*}
where
\begin{eqnarray*}
R_{n} & := & \widetilde{\mathbb{E}_{n}}[\exp\{i\langle\gamma_{n},H_{%
\lambda_{n}}(t_{n})\int_{t_{n-1}}^{t_{n}}H_{\lambda_{n}}^{-1}(s)d\widetilde{%
W_{s}^{(n)}}\rangle\}], \\
\mu_{n-1} & := &
\gamma_{n-1}+H_{\lambda_{n}}^{*-1}(t_{n-1})H_{\lambda_{n}}^{*}(t_{n})%
\gamma_{n}.
\end{eqnarray*}
Now we can see that the random term over $[t_{n-1}t_{n}]$ is factored out.

Similarly, by applying transformations as in step one, we further have
\begin{align*}
& g(\gamma_{1},\cdots,\gamma_{n};\lambda_{1},\cdots,\lambda_{n}) \\
= & R_{n}\cdot\widetilde{\mathbb{E}_{n-1}}[\exp\{\sum_{k=1}^{n-2}i\langle%
\gamma_{k},W_{t_{k}}\rangle+\langle\mu_{n-1},W_{t_{n-1}}\rangle\}\cdot%
\Delta_{n-1}],
\end{align*}
where
\begin{eqnarray*}
\Delta_{n-1}: & = & \exp\{\frac{1}{2}\int_{0}^{t_{n-1}}Tr(K_{n-1}(s))ds+%
\frac{1}{2}\int_{0}^{t_{n}}Tr(K_{n}(s))ds \\
& & +\sum_{k=1}^{n-2}\lambda_{k}\int_{0}^{t_{k}}\langle
A_{k}W_{s},dW_{s}\rangle\}.
\end{eqnarray*}
By a similar argument, let $\{H_{\lambda_{n-1},\lambda_{n}}(t):t\in[0,t_{n-1}%
]\}$ be the solution of the equation
\begin{eqnarray*}
\frac{d}{dt}H_{\lambda_{n-1},\lambda_{n}}(t) & = &
(K_{n}(t)+K_{n-1}(t)+\lambda_{n}A_{n}+\lambda_{n-1}A_{n-1})H_{\lambda_{n-1},%
\lambda_{n}}(t), \\
& & t\in[0,t_{n-1}], \\
H_{\lambda_{n-1},\lambda_{n}}(0) & = & Id,
\end{eqnarray*}
we obtain that
\begin{align*}
& g(\gamma_{1},\cdots,\gamma_{n};\lambda_{1},\cdots,\lambda_{n}) \\
= & R_{n}\cdot R_{n-1}\cdot\widetilde{\mathbb{E}_{n-1}}[\exp\{%
\sum_{k=1}^{n-3}i\langle\gamma_{k},W_{t_{k}}\rangle+i\langle%
\mu_{n-2},W_{t_{n-2}}\rangle\}\cdot\Delta_{n-1}],
\end{align*}
where
\begin{eqnarray*}
R_{n-1} & := & \widetilde{\mathbb{E}_{n-1}}[\exp\{i\langle\mu_{n-1},H_{%
\lambda_{n-1},\lambda_{n}}(t_{n-1})\int_{t_{n-2}}^{t_{n-1}}H_{\lambda_{n-1},%
\lambda_{n}}^{-1}(s)d\widetilde{W_{s}^{(n-1)}}\rangle\}], \\
\mu_{n-2} & := &
\gamma_{n-2}+H_{\lambda_{n-1},\lambda_{n}}^{*-1}(t_{n-2})H_{\lambda_{n-1},%
\lambda_{n}}^{*}(t_{n-1})\mu_{n-1}.
\end{eqnarray*}

Finally, by a simple induction argument, we arrive at
\begin{align*}
g(\gamma_{1},\cdots,\gamma_{n};\lambda_{1},\cdots,\lambda_{n}) &
=\prod_{j=1}^{n}(R_{j}\cdot\exp\{\frac{1}{2}\int_{0}^{t_{j}}Tr(K_{j}(s))ds%
\}).
\end{align*}
Here
\begin{eqnarray*}
R_{j}: & = & \widetilde{\mathbb{E}_{j}}[\exp\{i\langle\mu_{j},H_{%
\lambda_{j,}\cdots,\lambda_{n}}(t_{j})\int_{t_{j-1}}^{t_{j}}H_{\lambda_{j},%
\cdots,\lambda_{n}}^{-1}(s)d\widetilde{W_{s}^{(j)}}\}],\ \ \ j=1,2,\cdots,n,
\\
\end{eqnarray*}
$\{H_{\lambda_{j,}\cdots,\lambda_{n}}(t):t\in[0,t_{j}]\}$ is the solution of
the equation
\begin{eqnarray*}
\frac{d}{dt}H_{\lambda_{j,}\cdots,\lambda_{n}}(t) & = &
(\sum_{r=j}^{n}K_{j}(t)+\sum_{r=j}^{n}(\lambda_{j}A_{j}))H_{\lambda_{j,}%
\cdots,\lambda_{n}}(t),\ \ \ t\in[0,t_{j}], \\
H_{\lambda_{j,}\cdots,\lambda_{n}}(0) & = & Id,
\end{eqnarray*}
and $\{\mu_{j}:j=1,2,\cdots,n\}$ is defined recursively by
\begin{eqnarray*}
\mu_{n} & := & \gamma_{n}, \\
\mu_{j} & := &
\gamma_{j}+H_{\lambda_{j+1},\cdots,\lambda_{n}}^{*-1}(t_{j})H_{%
\lambda_{j+1},\cdots,\lambda_{n}}^{*}(t_{j+1})\mu_{j+1},\ \ \
j=1,2,\cdots,n-1.
\end{eqnarray*}
It remains to compute $R_{j}$$(j=1,2,\cdots,n)$ explicitly, which is easy
since everything here is Gaussian. Namely, we have
\begin{eqnarray*}
R_{j} & = & \widetilde{\mathbb{E}_{j}}[\exp\{i\langle
H_{\lambda_{j},\cdots,\lambda_{n}}^{*}(t_{j})\mu_{j},%
\int_{t_{j-1}}^{t_{j}}H_{\lambda_{j},\cdots,\lambda_{n}}^{-1}(s)d\widetilde{%
W_{s}^{(n)}}\}] \\
& = & \widetilde{\mathbb{E}_{j}}[\exp\{i\int_{t_{j-1}}^{t_{j}}\langle
H_{\lambda_{j},\cdots,\lambda_{n}}^{*-1}(s)H_{\lambda_{j},\cdots,%
\lambda_{n}}^{*}(t_{j})\mu_{j},d\widetilde{W_{s}^{(n)}}\rangle\}] \\
& = & \exp\{-\frac{1}{2}\int_{t_{j-1}}^{t_{j}}|H_{\lambda_{j},\cdots,%
\lambda_{n}}^{*-1}(s)H_{\lambda_{j},\cdots,\lambda_{n}}^{*}(t_{j})%
\mu_{j}|^{2}ds\}.
\end{eqnarray*}
Therefore, the proof is now complete and we have
\begin{align*}
& g(\gamma_{1},\cdots,\gamma_{n};\lambda_{1},\cdots,\lambda_{n}) \\
= & \prod_{j=1}^{n}\exp\{\frac{1}{2}\int_{0}^{t_{j}}Tr(K_{j}(s))ds-\frac{1}{2%
}\int_{t_{j-1}}^{t_{j}}|H_{\lambda_{j},\cdots,\lambda_{n}}^{*-1}(s)H_{%
\lambda_{j},\cdots,\lambda_{n}}^{*}(t_{j})\mu_{j}|^{2}ds\}.
\end{align*}
\end{proof}

From the proof of the above proposition, we can see that the computation of
\begin{equation*}
\mathbb{E}[\exp\{\sum_{k=1}^{n}i\langle\gamma_{k},W_{t_{k}}\rangle+%
\sum_{k=1}^{n}\lambda_{k}L_{t_{k}}^{A_{k}}\}]
\end{equation*}
for small $\lambda_{1},\cdots,\lambda_{n}\in\mathbb{R}$ reduces to the
solution of a recursive system of symmetric matrix Riccati equations and the
solution of a system of independent first order linear matrix ODEs. If $%
\gamma_{1}=\cdots=\gamma_{n}=0,$ then we don't need the ODE system at all.
Now we are going to complexify our case before.

\begin{lemma}
Fix $\gamma_{1},\cdots,\gamma_{n}\in\mathbb{R}.$ Then when $c$ is small
enough, the function
\begin{equation*}
\phi(z_{1},\cdots,z_{n}):=\mathbb{E}[\exp\{\sum_{k=1}^{n}i\langle%
\gamma_{k},W_{t_{k}}\rangle+\sum_{k=1}^{n}z_{k}L_{t_{k}}^{A_{k}}\}]
\end{equation*}
is holomorphic in the domain $D_{c}:=\{(z_{1},\cdots,z_{n})\in\mathbb{C}%
^{n}:Re(z_{j})\in(-c,c),j=1,2,\cdots,n\}$ of $\mathbb{C}^{n}$. Moreover, the
function
\begin{eqnarray*}
\psi(\lambda_{1},\cdots,\lambda_{n}): & = & \prod_{j=1}^{n}\exp\{\frac{1}{2}%
\int_{0}^{t_{j}}Tr(K_{j}(s))ds \\
& & -\frac{1}{2}\int_{t_{j-1}}^{t_{j}}|H_{\lambda_{j},\cdots,%
\lambda_{n}}^{*-1}(s)H_{\lambda_{j},\cdots,\lambda_{n}}^{*}(t_{j})%
\mu_{j}|^{2}ds\}
\end{eqnarray*}
defined on $\mathbb{R}^{n}$ can be extended holomorphically to$\mathbb{C}%
^{n} $. Such an extension is unique, and when restricted to $D_{c}$,
\begin{equation*}
\phi(z_{1},\cdots,z_{n})=\psi(z_{1},\cdots,z_{n}).
\end{equation*}
\end{lemma}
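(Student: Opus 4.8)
The plan is to prove the three assertions separately: the holomorphy of $\phi$ on the strip $D_{c}$, the holomorphic continuation of $\psi$, and the coincidence $\phi=\psi$ together with uniqueness of the extension. The first and third are routine once set up; the real work lies in the continuation of $\psi$.

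For the holomorphy of $\phi$, fix $\omega$ and note that $F(z,\omega):=\exp\{\sum_{k}i\langle\gamma_{k},W_{t_{k}}\rangle+\sum_{k}z_{k}L_{t_{k}}^{A_{k}}\}$ is an entire function of $z=(z_{1},\dots,z_{n})\in\mathbb{C}^{n}$, the first sum contributing only a unimodular factor since $\gamma_{k}$ and $W_{t_{k}}$ are real. I would then invoke the standard criterion that a parameter integral of functions holomorphic in the parameter and dominated, uniformly on compact sets, by an integrable function is itself holomorphic. On a compact $K\subset D_{c}$ pick $c'$ with $\sup_{z\in K,\,k}|\mathrm{Re}\,z_{k}|<c'<c$; then $|F(z,\omega)|\leqslant\prod_{k}(e^{c'L_{t_{k}}^{A_{k}}}+e^{-c'L_{t_{k}}^{A_{k}}})$, and expanding into $2^{n}$ summands $\exp\{\sum_{k}\varepsilon_{k}c'L_{t_{k}}^{A_{k}}\}$ with $\varepsilon_{k}\in\{-1,1\}$, each lies in $L^{1}(\mathbb{P})$ by the finiteness lemma. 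Holomorphy of $\phi$ then follows: continuity by dominated convergence, separate holomorphy in each $z_{k}$ by Morera's theorem after interchanging a contour integral with $\mathbb{E}$ (justified by the same domination and Fubini), and joint holomorphy by Osgood's lemma.

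The continuation of $\psi$ is the crux. The entries of the Riccati right-hand sides and of the linear coefficient $\sum_{r}K_{r}+\sum_{r}\lambda_{r}A_{r}$ are polynomial in the parameters, so by the analytic theory of dependence of ODE solutions on parameters the maps $\lambda\mapsto K_{\lambda_{j},\dots,\lambda_{n}}(t)$ and $\lambda\mapsto H_{\lambda_{j},\dots,\lambda_{n}}(t)$ are jointly holomorphic wherever the solutions exist, whence $\mu_{j}$, $\int_{0}^{t_{j}}Tr(K_{j})\,ds$ and $\int_{t_{j-1}}^{t_{j}}|H_{j}^{*-1}(s)H_{j}^{*}(t_{j})\mu_{j}|^{2}\,ds$ inherit holomorphy and so does $\psi$. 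The transparent way to exhibit this continuation, and to read off where it can fail, is to linearize each matrix Riccati equation: writing $K_{j}=\dot{\Phi}_{j}\Phi_{j}^{-1}$ (up to the standard sign) for a matrix $\Phi_{j}$ solving a linear Hamiltonian-type system with coefficients affine in $\lambda$, the fundamental solution $\Phi_{j}$ is entire in $\lambda$, so $K_{j}$ is meromorphic with poles on $\{\det\Phi_{j}=0\}$, the identity $Tr(\dot{\Phi}_{j}\Phi_{j}^{-1})=\tfrac{d}{dt}\log\det\Phi_{j}$ turns $\exp\{\tfrac12\int Tr(K_{j})\}$ into a branch of a determinantal square root, and the $H$-factor is treated likewise.

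The main obstacle is therefore the domain of this continuation rather than its mere existence. Since the Riccati solutions generically acquire singularities for complex $\lambda$ — already the scalar L\'{e}vy case produces $\psi\sim 1/\cos$, which is only meromorphic — the continuation is not literally entire, and one must verify that for $c$ small enough the singular locus $\bigcup_{j}\{\det\Phi_{j}=0\}$ avoids the strip $D_{c}$; this is exactly the region the main theorem needs, as $D_{c}$ contains all purely imaginary points $z_{j}=i\Lambda_{j}$ of Theorem \ref{mth1}. Granting this, the identification is immediate: by the Proposition, $\phi(\lambda)=g(\gamma_{1},\dots,\gamma_{n};\lambda_{1},\dots,\lambda_{n})=\psi(\lambda)$ for all real $\lambda\in(-c,c)^{n}=D_{c}\cap\mathbb{R}^{n}$, and since $\phi$ and the continuation of $\psi$ are holomorphic on the connected open set $D_{c}$ and agree on the maximal totally real slice $(-c,c)^{n}$, the identity principle in several complex variables (a holomorphic function vanishing on an open subset of $\mathbb{R}^{n}$ vanishes on the connected component) forces $\phi\equiv\psi$ on $D_{c}$; the same principle yields uniqueness of the extension.
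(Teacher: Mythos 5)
Your treatment of the holomorphy of $\phi$ and of the final identification follows the same route as the paper: an integrable majorant supplied by the finiteness lemma, Morera plus Fubini for holomorphy of the parameter integral, and the identity theorem applied on the totally real slice $(-c,c)^{n}$; your domination via the $2^{n}$ exponential summands is in fact more explicit than the paper's appeal to uniform integrability. Where you genuinely diverge is on the continuation of $\psi$. The paper simply asserts that the Riccati and linear systems ``extend naturally'' to complex parameters and concludes that $\psi$ extends holomorphically to all of $\mathbb{C}^{n}$. Your instinct that this literal claim cannot be right is correct: the paper's own Section 3 computation produces factors of reciprocal-cosine type, which are meromorphic with (real) poles and not entire, so only holomorphy on the strip $D_{c}$ can be expected, and that is indeed all the proof of Theorem \ref{mth1} uses. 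Your linearization $K_{j}=\dot{\Phi}_{j}\Phi_{j}^{-1}$ together with $Tr(\dot{\Phi}_{j}\Phi_{j}^{-1})=\tfrac{d}{dt}\log\det\Phi_{j}$ is the standard and correct device for exhibiting the continuation and locating its singularities, and it is absent from the paper.

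The one step you explicitly ``grant'' is, however, a genuine gap rather than a formality: you must show that for $c$ small the singular locus $\bigcup_{j}\{\det\Phi_{j}=0\}$ misses $D_{c}$. Since $D_{c}$ is unbounded --- it contains the whole imaginary subspace $i\mathbb{R}^{n}$, which is precisely where Theorem \ref{mth1} is applied --- this cannot follow from a compactness or perturbation argument around $\lambda=0$; one needs global existence of the Riccati solutions on $[0,t_{j}]$ for all parameters in the strip, which is the content the paper defers to the matrix Riccati references cited in the remark after Theorem \ref{mth1}. Without that input your argument, like the paper's, does not close; with it, you should also note that $\det\Phi_{j}$ being zero-free on the convex (hence simply connected) set $D_{c}$ makes the branch of the determinantal square root single-valued, so $\psi$ is well defined there. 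In short: same skeleton as the paper, a sharper and more honest account of the crux, but the decisive existence statement is still assumed rather than proved.
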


\begin{proof}
By lemma 1, when $c$ is small enough, $\phi(z_{1},\cdots,z_{n})$ is well
defined on $D_{c}.$ The continuity of $\phi(z_{1},\cdots,z_{n})$ follows
easily from uniform integrability. Moreover, since the function
\begin{equation*}
(z_{1},\cdots,z_{n})\mapsto\exp\{\sum_{k=1}^{n}i\langle\gamma_{k},W_{t_{k}}%
\rangle+\sum_{k=1}^{n}z_{k}L_{t_{k}}^{A_{k}}\}
\end{equation*}
is holomorphic on $\mathbb{C}^{n}$ for every $\omega\in\Omega,$ by Fubini's
theorem and Morera's theorem, $\phi(z_{1},\cdots,z_{n})$ is holomorphic in $%
D_{c}.$

On the other hand, it is obvious that the recursive system of matrix Riccati
equations and the system of independent matrix ODEs defined in proposition 2
depend analytically on $\lambda_{1},\cdots,\lambda_{n}\in\mathbb{R}$ and
extend naturally to the case where $\lambda_{1},\cdots,\lambda_{n}\in\mathbb{%
C}.$ Consequently, when $(\lambda_{1},\cdots,\lambda_{n})$ is replaced by $%
(z_{1},\cdots,z_{n})\in\mathbb{C}^{n}$, the two systems determine solutions
depending holomorphically on $z_{1},\cdots,z_{n}.$ It follows that $%
\psi(\lambda_{1},\cdots,\lambda_{n})$ possesses a unique holomorphic
extension to $\mathbb{C}^{n}$.

Finally, since $\phi$and $\psi$ coincide in the set $\{(\lambda_{1},\cdots,%
\lambda_{n})\in\mathbb{R}^{n}:\lambda_{j}\in(-c,c),j=1,2,\cdots,n\},$ by the
identity theorem, they coincide in $D_{c}$.
\end{proof}

With the preparations above, the proof of our main result on the formula for
the joint characteristic function $f(\gamma_{1},\cdots,\gamma_{n};%
\Lambda_{1},\cdots,\Lambda_{n})$ defined in section one now follows easily.
In fact, the result follows immediately from proposition 2 and lemma 3 with
the observation that the set
\begin{equation*}
\{(i\Lambda_{1},\cdots,i\Lambda_{n}):\Lambda_{j}\in\mathbb{R}%
,j=1,2,\cdots,n\}
\end{equation*}
is contained in $D_{c}$.

\section{An Example: the Two Dimensional L\'{e}vy's Stochastic Area Process}

In this section, we are going to apply our result to study the two
dimensional L\'{e}vy's stochastic area process, first introduced by L\'{e}vy
in {\cite{MR0002734}}. Namely, we consider the case where $d=2$ and
\begin{equation*}
A=\left(
\begin{array}{cc}
0 & -1 \\
1 & 0%
\end{array}%
\right) .
\end{equation*}%
The L\'{e}vy's stochastic area process is given by
\begin{equation*}
L_{t}=\int_{0}^{t}(W_{s}^{(1)}dW_{s}^{(2)}-W_{s}^{(2)}dW_{s}^{(1)}),\ \ \
t\geqslant 0.
\end{equation*}%
We try to derive an explicit formula for the finite dimensional joint
characteristic function of the coupled process $\{(W_{t},L_{t}):t\geqslant
0\}$.

Throughout this section, $0=t_{0}<t_{1}<\cdots<t_{n}$ will be fixed.

In section 2, the computation of the finite dimensional joint characteristic
function of the coupled process $\{(W_{t},L_{t}^{A}):t\geqslant0\}$ reduces
to the solution of a recursive system of symmetric Riccati equations and a
system of independent first order linear matrix ODEs. In the case here, we
will see that the Riccati system is actually real and scalar, and the linear
system is explicitly solvable. In fact, we have:

\begin{proposition}
For the two dimensional L\'{e}vy's stochastic area process $%
\{L_{t}:t\geqslant 0\}$, by using the same notation as in theorem 1 with the
assumption
\begin{equation*}
A_{1}=\cdots =A_{n}=A,
\end{equation*}%
the solution matrices of the Riccati system are real diagonal matrices with
identical diagonal entries, and the Riccati system essentially reduces to a
system of real scalar Riccati equations recursively defined from $j=n$ to $%
j=1$ by
\begin{eqnarray*}
\frac{d}{dt}k_{i\Lambda _{j},\cdots ,i\Lambda _{n}}(t) &=&(\Lambda
_{j}^{2}+2\Lambda _{j}\sum_{r=j+1}^{n}\Lambda
_{r})-2(\sum_{r=j+1}^{n}k_{i\Lambda _{r},\cdots ,i\Lambda
_{n}}(t))k_{i\Lambda _{j},\cdots ,i\Lambda _{n}}(t) \\
&&-k_{i\Lambda _{j},\cdots ,i\Lambda _{n}}^{2}(t),\ \ \ t\in \lbrack
0,t_{j}], \\
k_{i\Lambda _{j},\cdots ,i\Lambda _{n}}(t_{j}) &=&0.
\end{eqnarray*}%
Moreover, the linear system in theorem 1 is explicitly solvable, namely, for
$j=1,2,\cdots ,n,$
\begin{equation*}
H_{i\Lambda _{j},\cdots ,i\Lambda _{n}}(t)=\exp
\{\int_{0}^{t}(\sum_{r=j}^{n}K_{i\Lambda _{r},\cdots ,i\Lambda
_{n}}(s)+i(\sum_{r=j}^{n}\Lambda _{r})A)ds\},\ \ \ t\in \lbrack 0,t_{j}].
\end{equation*}
\end{proposition}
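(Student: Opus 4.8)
The plan is to exploit the commutative algebra generated by the rotation matrix $A$. The relevant facts are $A^{\ast}=-A$, $A^{2}=-I$, and consequently $A^{\ast}A=AA^{\ast}=I$; in particular the real linear span of $I$ and $A$ is an abelian subalgebra of $M^{2}(\mathbb{R})$. I would prove the first assertion by backward induction on $j$, showing that the solution $K_{i\Lambda_{j},\cdots,i\Lambda_{n}}(t)$ of the Riccati system of Theorem \ref{mth1} (with all $A_{r}=A$) is a real scalar multiple of the identity, i.e. a diagonal matrix with identical entries, $k_{i\Lambda_{j},\cdots,i\Lambda_{n}}(t)\,I$. For the base case $j=n$ one has $C_{i\Lambda_{n}}(t)=\Lambda_{n}^{2}A^{\ast}A=\Lambda_{n}^{2}I$; inserting the ansatz $K_{n}=k_{n}I$ into its Riccati equation eliminates the cross term, since $K_{n}(i\Lambda_{n}A)+(i\Lambda_{n}A^{\ast})K_{n}=i\Lambda_{n}k_{n}(A+A^{\ast})=0$, and leaves the scalar equation $k_{n}'=\Lambda_{n}^{2}-k_{n}^{2}$ with $k_{n}(t_{n})=0$. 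By uniqueness of the symmetric matrix Riccati solution this scalar solution \emph{is} the matrix solution, and it is real because its defining ODE has real coefficients and real terminal data.

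For the inductive step I assume $K_{r}=k_{r}I$ for every $r>j$, so that each accumulated sum $\sum_{r=j+1}^{n}K_{r}(t)=\big(\sum_{r=j+1}^{n}k_{r}(t)\big)I$ is scalar. Substituting the ansatz $K_{j}=k_{j}I$ into the definition of $C_{i\Lambda_{j},\cdots,i\Lambda_{n}}$, the bracketed term collapses under $A^{\ast}=-A$, $A^{\ast}A=I$, $A^{2}=-I$ to $2i\big(\sum_{r=j+1}^{n}\Lambda_{r}\big)I$, giving the scalar $C_{j}=\big(\Lambda_{j}^{2}+2\Lambda_{j}\sum_{r=j+1}^{n}\Lambda_{r}\big)I$. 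Substituting the same ansatz into the full Riccati equation, every matrix-valued (i.e. $A$-linear) cross term again cancels because $A+A^{\ast}=0$, while the scalar cross terms produce $-2\big(\sum_{r=j+1}^{n}k_{r}\big)k_{j}$ and the quadratic term gives $-k_{j}^{2}$. What remains is exactly the scalar Riccati equation asserted in the proposition, and uniqueness once more promotes the scalar ansatz to the genuine real matrix solution.

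For the linear system, with $K_{r}=k_{r}I$ in hand the coefficient matrix of the $H$-equation becomes $M_{j}(t)=\big(\sum_{r=j}^{n}k_{r}(t)\big)I+i\big(\sum_{r=j}^{n}\Lambda_{r}\big)A$, which for every $t$ lies in the abelian algebra spanned by $I$ and $A$. Hence $M_{j}(t)$ and $M_{j}(s)$ commute for all $s,t$, so $N_{j}(t):=\int_{0}^{t}M_{j}(s)\,ds$ commutes with its own derivative $M_{j}(t)$. This is precisely the condition guaranteeing $\frac{d}{dt}\exp(N_{j}(t))=M_{j}(t)\exp(N_{j}(t))$; together with $\exp(N_{j}(0))=I$ and uniqueness for the linear ODE this yields the claimed closed form $H_{i\Lambda_{j},\cdots,i\Lambda_{n}}(t)=\exp\{\int_{0}^{t}(\sum_{r=j}^{n}K_{i\Lambda_{r},\cdots,i\Lambda_{n}}(s)+i(\sum_{r=j}^{n}\Lambda_{r})A)\,ds\}$.

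The one step deserving genuine care is this last commutation argument: the identity $\frac{d}{dt}e^{\int_{0}^{t}M}=M e^{\int_{0}^{t}M}$ fails for general time-dependent $M$, so I would record as a short lemma that $[M_{j}(t),N_{j}(t)]=0$ (an immediate consequence of both matrices lying in the commutative algebra generated by $A$) before differentiating the exponential. The rest reduces to routine matrix substitution together with the Riccati- and linear-ODE uniqueness already quoted after Theorem \ref{mth1}.
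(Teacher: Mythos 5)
Your proposal is correct and follows essentially the same route as the paper: verify that each $C_{i\Lambda_{j},\cdots,i\Lambda_{n}}$ is a real scalar multiple of the identity, use uniqueness of the matrix Riccati solution to promote the scalar ansatz $K_{j}=k_{j}I$ by backward recursion, and solve the linear system via the commutativity of the coefficient matrices (which live in the abelian algebra spanned by $I$ and $A$) at different times. Your explicit inductive step and the short lemma $[M_{j}(t),\int_{0}^{t}M_{j}]=0$ merely spell out what the paper leaves as "the rest follows from recursion" and as the observation $\Phi_{j}(s)\Phi_{j}(t)=\Phi_{j}(t)\Phi_{j}(s)$.
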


\begin{proof}
We first consider the Riccati system. For $j=n$,
\begin{equation*}
C_{i\Lambda_{n}}(t)=\left(%
\begin{array}{cc}
\Lambda_{n}^{2} & 0 \\
0 & \Lambda_{n}^{2}%
\end{array}%
\right),\ \ \ t\in[0,t_{n}],
\end{equation*}
and the Riccati equation is defined by
\begin{eqnarray*}
\frac{d}{dt}K_{i\Lambda_{n}}(t) & = & \left(%
\begin{array}{cc}
\Lambda_{n}^{2} & 0 \\
0 & \Lambda_{n}^{2}%
\end{array}%
\right)-i\Lambda_{n}\cdot(K_{i\Lambda_{n}}(t)\left(%
\begin{array}{cc}
0 & -1 \\
1 & 0%
\end{array}%
\right) \\
& & +\left(%
\begin{array}{cc}
0 & 1 \\
-1 & 0%
\end{array}%
\right)K_{i\Lambda_{n}}(t))-K_{i\Lambda_{n}}^{2}(t),\ \ \ t\in[0,t_{n}], \\
K_{i\Lambda_{n}}(t_{n}) & = & 0.
\end{eqnarray*}
By uniqueness, it is easy to see that the solution of the above equation is
given by a matrix of the form
\begin{equation*}
K_{i\Lambda_{n}}(t)=\left(%
\begin{array}{cc}
k_{i\Lambda_{n}}(t) & 0 \\
0 & k_{i\Lambda_{n}}(t)%
\end{array}%
\right),\ \ \ t\in[0,t_{n}],
\end{equation*}
where $\{k_{i\Lambda_{n}}(t):t\in[0,t_{n}]\}$ solves the real scalar Riccati
equation
\begin{eqnarray*}
\frac{d}{dt}k_{i\Lambda_{n}}(t) & = &
\Lambda_{n}^{2}-k_{i\Lambda_{n}}^{2}(t),\ \ \ t\in[0,t_{n}], \\
k_{i\Lambda_{n}}(t_{n}) & = & 0.
\end{eqnarray*}
For $j=n-1,$ by easy computation we have
\begin{equation*}
C_{i\Lambda_{n-1},i\Lambda_{n}}(t)=\left(%
\begin{array}{cc}
\Lambda_{n}^{2}+2\Lambda_{n-1}\Lambda_{n} & 0 \\
0 & \Lambda_{n}^{2}+2\Lambda_{n-1}\Lambda_{n}%
\end{array}%
\right),\ \ \ t\in[0,t_{n-1}].
\end{equation*}
It follows from uniqueness and the case $j=n$ that $K_{i\Lambda_{n-1},i%
\Lambda_{n}}(t)$ is also a diagonal matrix of the form
\begin{equation*}
K_{i\Lambda_{n-1},i\Lambda_{n}}(t)=\left(%
\begin{array}{cc}
k_{i\Lambda_{n-1},i\Lambda_{n}}(t) & 0 \\
0 & k_{i\Lambda_{n-1},i\Lambda_{n}}(t)%
\end{array}%
\right),\ \ \ t\in[0,t_{n-1}],
\end{equation*}
where $\{k_{i\Lambda_{n-1},i\Lambda_{n}}(t):t\in[0,t_{n-1}]\}$ solves the
real scalar Riccati equation
\begin{eqnarray*}
\frac{d}{dt}k_{i\Lambda_{n},i\Lambda_{n-1}}(t) & = &
(\Lambda_{n}^{2}+2\Lambda_{n-1}\Lambda_{n})-2k_{i\Lambda_{n}}(t)k_{i%
\Lambda_{n-1},i\Lambda_{n}}(t) \\
& & -k_{i\Lambda_{n-1},i\Lambda_{n}}^{2}(t),\ \ \ t\in[0,t_{n-1}], \\
k_{i\Lambda_{n-1},i\Lambda_{n}}(t_{n-1}) & = & 0.
\end{eqnarray*}
The rest of the argument follows from recursion easily (the crucial
observation is that $C_{i\Lambda_{j},\cdots,i\Lambda_{n}}(t)$ is always a
real diagonal matrix with identical diagonal entries).

The second part of the proposition follows immediately from the fact that
for $j=1,2,\cdots,n,$ if we denote $\Phi_{j}(t)$ as the coefficient matrix
of the $j-$th linear ODE of the independent system, then
\begin{equation*}
\Phi_{j}(s)\Phi_{j}(t)=\Phi_{j}(t)\Phi_{j}(s),\ \ \ s,t\in[0,t_{j}].
\end{equation*}
\end{proof}

Now we are going to study the finite dimensional joint characteristic
function $$f(\gamma_{1},\cdots,\gamma_{n};\Lambda_{1},\cdots,\Lambda_{n})$$ of
$\{(W_{t},L_{t}):t\geqslant0\}$. To simplify our notation, we use $%
\{k_{j},H_{j}:j=1,\cdots,n\}$ to denote the solution $\{k_{i\Lambda_{j},%
\cdots,i\Lambda_{n}},H_{i\Lambda_{j},\cdots,i\Lambda_{n}}:j=1,\cdots,n\}$ in
proposition 5. Assume first that (nondegeneracy)
\begin{equation*}
\Lambda_{j}+\cdots+\Lambda_{n}\neq0.\ \ \ j=1,\cdots,n.
\end{equation*}

We first study the process $\{L_{t}:t\geqslant 0\}$. A crucial observation
is that for $j=1,\cdots ,n$, by adding together from the $j-$th equation to
the $n-$th equation in the scalar Riccati system in proposition 5, we obtain
a neat scalar Riccati equation without linear terms:
\begin{equation*}
\frac{d}{dt}(\sum_{r=j}^{n}k_{r}(t))=(\sum_{r=j}^{n}\Lambda
_{r})^{2}-(\sum_{r=j}^{n}k_{r}(t))^{2},\ \ \ t\in \lbrack 0,t_{j}],
\end{equation*}%
in which the unique solution is determined by the terminal data at $t=t_{j}.$
Let
\begin{equation*}
c_{j}:=\sum_{r=j}^{n}\Lambda _{r},\ s_{j}(t):=\sum_{r=j}^{n}k_{j}(t),\ \ \
t\in \lbrack 0,t_{j}],j=1,\cdots ,n,
\end{equation*}%
then it is not hard to derive that
\begin{eqnarray*}
s_{j}(t) &=&c_{j}\frac{c_{j}\sinh (c_{j}(t-t_{j}))+s_{j}(t_{j})\cosh
(c_{j}(t-t_{j}))}{c_{j}\cosh (c_{j}(t-t_{j}))+s_{j}(t_{j})\sinh
(c_{j}(t-t_{j}))}, \\
&&t\in \lbrack 0,t_{j}],j=1,\cdots ,n,
\end{eqnarray*}%
where $\{s_{j}(t_{j})\}_{j=1}^{n}$ is defined recursively by
\begin{eqnarray*}
s_{n}(t_{n}) &=&0, \\
s_{j-1}(t_{j-1}) &=&c_{j}\frac{c_{j}\sinh
(c_{j}(t_{j-1}-t_{j}))+s_{j}(t_{j})\cosh (c_{j}(t_{j-1}-t_{j}))}{c_{j}\cosh
(c_{j}(t_{j-1}-t_{j}))+s_{j}(t_{j})\sinh (c_{j}(t_{j-1}-t_{j}))}, \\
&&j=2,3,\cdots ,n.
\end{eqnarray*}%
Now apply theorem 1, we have
\begin{align*}
& \mathbb{E}[\exp \{\sum_{k=1}^{n}i\Lambda _{k}L_{t_{k}}\}] \\
=& \exp \{\sum_{j=1}^{n}\frac{1}{2}\int_{0}^{t_{j}}Tr(K_{i\Lambda
_{j},\cdots ,i\Lambda _{n}}(s))ds\} \\
=& \exp \{\sum_{j=1}^{n}\int_{0}^{t_{j}}k_{j}(s)ds\} \\
=& \exp \{\sum_{j=1}^{n}\int_{t_{j-1}}^{t_{j}}s_{j}(u)du\} \\
=& \prod_{j=1}^{n}\frac{c_{j}}{c_{j}\cosh
(c_{j}(t_{j-1}-t_{j}))+s_{j}(t_{j})\sinh (c_{j}(t_{j-1}-t_{j}))},\ \ \ \ \ \
\ \ \ \ (\ast )
\end{align*}%
which seems, to our best knowledge, not appear in literatures, though it
should follow from the Markov property and L\'{e}vy's formula.

It should be pointed out that the nondegeneracy assumption on $%
\{\Lambda_{j}:j=1,\cdots,n\}$ is not important. In fact, if for some $j,$
\begin{equation*}
c_{j}=0,
\end{equation*}
and assume that $\{s_{j+1}(t):t\in[0,t_{j+1}]\}$ has been solved, then $%
\{s_{j}(t):t\in[0,t_{j}]\}$ can be sovled as
\begin{equation*}
s_{j}(t)=\frac{s_{j}(t_{j})}{1+s_{j}(t_{j})(t-t_{j})}=\frac{s_{j+1}(t_{j})}{%
1+s_{j+1}(t_{j})(t-t_{j})},\ \ \ t\in[0,t_{j}].
\end{equation*}
It is easy to see that $\{s_{j}(t):t\in[0,t_{j}]\}$ is actually the limit of
the nondegenerate case as $c_{j}\rightarrow0.$ Moreover, the corresponding
term in the product $(*)$ can be written as
\begin{equation*}
\frac{1}{1+s_{j}(t_{j})(t_{j-1}-t_{j})},
\end{equation*}
which is also the limit of the nondegenerate case. Therefore, we still use
the same notation even in degenerate cases.

Now consider the coupled process $\{(W_{t},L_{t}):t\geqslant0\}.$ By
proposition 5 and the above computation, for $j=1,\cdots,n$, we can solve
the linear system explicitly to obtain
\begin{align*}
H_{j}(t)= & \exp\{\left(%
\begin{array}{cc}
a_{j}(t) & -ic_{j}t \\
ic_{j}t & a_{j}(t)%
\end{array}%
\right)\},\ \ \ t\in[0,t_{j}],
\end{align*}
where
\begin{eqnarray*}
a_{j}(t): & = & \int_{0}^{t}s_{j}(u)du \\
& = & \ln\frac{c_{j}\cosh(c_{j}(t-t_{j}))+s_{j}(t_{j})\sinh(c_{j}(t-t_{j}))}{%
c_{j}\cosh(c_{j}t_{j})-s_{j}(t_{j})\sinh(c_{j}t_{j})},\ \ \ t\in[0,t_{j}].
\end{eqnarray*}
Here the formula for $\{a_{j}(t):t\in[0,t_{j}]\}$ works in the degenerate
case where $c_{j}=0$ as well.

Finally, by theorem 1, we can write down the finite dimensional joint
characteristic function of the coupled process $\{(W_{t},L_{t}):t\geqslant0%
\} $. Namely, we have

\begin{theorem}
For the coupled process $\{(W_{t},L_{t}):t\geqslant 0\},$ the finite
dimensional joint characteristic function $f(\gamma _{1},\cdots ,\gamma
_{n};\Lambda _{1},\cdots ,\Lambda _{n})$ is given by
\begin{align*}
f(\gamma _{1},\cdots ,\gamma _{n};\Lambda _{1},\cdots ,\Lambda _{n})=&
\prod_{j=1}^{n}\frac{c_{j}}{c_{j}\cosh
(c_{j}(t_{j-1}-t_{j}))+s_{j}(t_{j})\sinh (c_{j}(t_{j-1}-t_{j}))}\cdot \\
& \exp \{-\frac{1}{2}\int_{t_{j-1}}^{t_{j}}\langle H_{j}^{\ast
-1}(s)H_{j}^{\ast }(t_{j})\mu _{j}\rangle {}^{2}ds\},
\end{align*}%
where $\{c_{j},s_{j}(t_{j}),H_{j}:j=1,\cdots ,n\}$ is defined previously in
this section, and $\{\mu _{j}:j=1,\cdots ,n\}$ is defined recursively in
terms of $\{\gamma _{j},H_{j}:j=1,\cdots ,n\}$ as in theorem 1.
\end{theorem}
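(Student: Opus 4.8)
The plan is to obtain the formula by specializing Theorem~\ref{mth1} to $A_1 = \cdots = A_n = A$ and then substituting the explicit solutions furnished by Proposition~5, so that the only genuine computation left is the evaluation of the trace integrals. From Theorem~\ref{mth1}, $f$ is the product over $j$ of two exponential factors: a \emph{trace factor} $\exp\{\frac{1}{2}\int_0^{t_j} Tr(K_{i\Lambda_j,\cdots,i\Lambda_n}(s))\,ds\}$ and a \emph{Gaussian factor} $\exp\{-\frac{1}{2}\int_{t_{j-1}}^{t_j}\langle H_{i\Lambda_j,\cdots,i\Lambda_n}^{\ast-1}(s)H_{i\Lambda_j,\cdots,i\Lambda_n}^{\ast}(t_j)\mu_j\rangle^2\,ds\}$. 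The Gaussian factor is already in the desired form once the explicit $H_j$ is inserted, so the work concentrates on the trace factor.

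First I would use the diagonal structure from Proposition~5: since each $K_{i\Lambda_j,\cdots,i\Lambda_n}(s)$ is a real scalar multiple of the identity, one has $Tr(K_{i\Lambda_j,\cdots,i\Lambda_n}(s)) = 2k_{i\Lambda_j,\cdots,i\Lambda_n}(s)$, whence $\frac{1}{2}\int_0^{t_j}Tr(K_j(s))\,ds = \int_0^{t_j}k_j(s)\,ds$. The key algebraic step is the reindexing identity
\begin{equation*}
\sum_{j=1}^n\int_0^{t_j}k_j(s)\,ds = \sum_{j=1}^n\int_{t_{j-1}}^{t_j}s_j(u)\,du,\qquad s_j := \sum_{r=j}^n k_r,
\end{equation*}
which follows by writing the right side as a double sum over $(j,r)$, interchanging the order of summation (for fixed $r$ the index $j$ runs from $1$ to $r$), and telescoping the intervals $[t_{j-1},t_j]$ into $[0,t_r]$ since $t_0 = 0$. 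I would then verify that the closed form for $s_j$ recorded in this section is precisely $\frac{d}{du}\ln\big(c_j\cosh(c_j(u-t_j)) + s_j(t_j)\sinh(c_j(u-t_j))\big)$, so that, using $\cosh 0 = 1$ and $\sinh 0 = 0$ at the upper limit $u = t_j$, each summand integrates to $\ln\frac{c_j}{c_j\cosh(c_j(t_{j-1}-t_j)) + s_j(t_j)\sinh(c_j(t_{j-1}-t_j))}$. Exponentiating reproduces the first product factor in the statement; this is exactly the computation $(\ast)$ already carried out for the marginal law of $\{L_t\}$.

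For the Gaussian factor I would carry over the $\mu_j$ defined recursively in Theorem~\ref{mth1} verbatim and insert the explicit matrix exponential form of $H_j$ derived above in this section; the symbol $\langle\cdot\rangle^2$ is the holomorphic continuation of $|\cdot|^2$ supplied by Lemma~3, so no further identity is needed here. Assembling the two factors yields the claimed formula. The main obstacle is the bookkeeping in the reindexing identity together with confirming that $s_j$ is indeed the logarithmic derivative displayed above; once these are in hand the theorem is a matter of substitution. Finally, the nondegeneracy hypothesis $c_j \neq 0$ may be dropped by continuity, since both the closed form for $s_j$ and the corresponding product factor admit well-defined limits as $c_j \to 0$, as recorded earlier in this section.
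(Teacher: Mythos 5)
Your proposal is correct and follows essentially the same route as the paper: specialize Theorem \ref{mth1} with $A_{1}=\cdots=A_{n}=A$, use the scalar/diagonal reduction of Proposition 5 so that $\frac{1}{2}Tr(K_{j})=k_{j}$, convert $\sum_{j}\int_{0}^{t_{j}}k_{j}$ into $\sum_{j}\int_{t_{j-1}}^{t_{j}}s_{j}$ by reindexing, and integrate $s_{j}$ as a logarithmic derivative to get the product $(\ast)$, with the Gaussian factor carried over verbatim and the degenerate case $c_{j}=0$ handled by passing to the limit. The only cosmetic difference is that you make the double-sum interchange and the logarithmic-derivative verification explicit where the paper leaves them implicit.
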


%\bibliographystyle{amsplain}
%\bibliography{2010qian}

\providecommand{\bysame}{\leavevmode\hbox to3em{\hrulefill}\thinspace} %
\providecommand{\MR}{\relax\ifhmode\unskip\space\fi MR }
% \MRhref is called by the amsart/book/proc definition of \MR.
\providecommand{\MRhref}[2]{  \href{http://www.ams.org/mathscinet-getitem?mr=#1}{#2}
} \providecommand{\href}[2]{#2}

\end{document}